\documentclass[a4paper, 11pt]{article}
\usepackage{amsmath,amssymb,esint,amscd,xspace,fancyhdr,color,authblk,srcltx,fontenc,bbm}
\usepackage{hyperref}
\usepackage{cite}
\newtheorem{theorem}{Theorem}[section]

\newtheorem{definition}[theorem]{Definition}
\newtheorem{lemma}[theorem]{Lemma}

\newtheorem{remark}[theorem]{Remark}
\newenvironment{proof}[1][Proof]{\textbf{#1.} }{\hfill\rule{0.5em}{0.5em}}
{\catcode`\@=11\global\let\AddToReset=\@addtoreset
\AddToReset{equation}{section}

\AddToReset{theorem}{section}

\title{\textsc{Harnack inequality for double-phase functionals with Muckenhoupt-type growth functions}}

\author{Minh-Phuong Tran\thanks{Applied Analysis Research Group, Faculty of Mathematics and Statistics, Ton Duc Thang University, Ho Chi Minh City, Vietnam; \texttt{tranminhphuong@tdtu.edu.vn}}, Thanh-Nhan Nguyen\footnote{Corresponding author} \thanks{Group of Analysis and Applied Mathematics, Department of Mathematics, Ho Chi Minh City University of Education, Ho Chi Minh City, Vietnam; \texttt{nhannt@hcmue.edu.vn}}}

\date{\today}

\begin{document}
 
\maketitle
\begin{abstract}

We investigate a general class of variational integrals under a structural condition imposed on the double-phase function, recently introduced in~\cite{ADKO2026}. In this setting, the strong Harnack inequality for non-negative local quasi-minimizers is established via an appropriate De Giorgi-type iteration argument. Most notably, the proposed analytical approach in this paper provides a new perspective for deriving Harnack-type inequalities for more general variational functionals under a Muckenhoupt-type structural condition on the double-phase function, without relying on the classical coefficient-freezing strategy based on the H\"older continuity of the modulating coefficient and the balance condition on the growth exponents.\\

\noindent {\bf Keywords.} Harnack inequality; Double-phase functionals; Muckenhoupt-type modular functions; De Giorgi-type iteration; quasi-minimizers. 

\medskip

\noindent  {\bf 2020 Mathematics Subject Classification.} 35B65; 35J70; 35J75; 42B37. 
\end{abstract}
%\nopagebreak
\maketitle

%\tableofcontents
%-------------------------------------------------------------
%\newpage
\section{Introduction}
\label{sec:intro}

This paper is devoted to the study of local minimizers for a general class of variational integrals of the type
\begin{align}
\label{eq-main}
v \mapsto \mathbb{F}(v,\Omega) := \int_{\Omega} \mathcal{F}(x,v,\nabla v) dx,
\end{align}
where $\Omega \subset \mathbb{R}^n$ be an open bounded subset ($n \ge 2$); the energy density $\mathcal{F}: \Omega \times \mathbb{R} \times \mathbb{R}^n \to [0,\infty)$ is a Carath\'eodory function satisfying the following growth condition
\begin{align}\label{cond-F}
C_{\mathcal{F}}^{-1} \mathcal{H}(x,|\xi|) \le \mathcal{F}(x,z,\xi) \le C_{\mathcal{F}} \mathcal{H}(x,|\xi|), \quad (x,z,\xi) \in \Omega \times \mathbb{R} \times \mathbb{R}^n,
\end{align}
for some constant $C_{\mathcal{F}} \ge 1$. Here, $\mathcal{H}: \mathbb{R}^n \times [0,\infty) \to [0,\infty)$ stands for the double-phase integrand, defined by
\begin{align}\label{def-Hxt}
\mathcal{H}(x,t) = t^p + a(x) t^{q}, \quad (x,t) \in \mathbb{R}^n \times [0,\infty),
\end{align}
where $1<p \le q <\infty$ are the growth exponents and $a: \mathbb{R}^n \to [0,\infty)$ acts as the modulating coefficient. This integrand naturally switches between two polynomial-growth phases, according to the zero set of the coefficient $a(\cdot)$. 

In the last few decades, the study of non-uniform ellipticity and unbalanced integral functionals has received considerable attention from different points of view. Among them, the class of double-phase functionals, also referred to as variational integrals with $(p,q)$-growth conditions, stands out as a fundamental and prototypical model in this field. The structure of these functionals, which dictates the transition between two distinct elliptic phases, naturally gives rise to non-standard behavior, most notably the Lavrentiev phenomenon (see~\cite{Zhikov1995, Zhikov1997}).  Motivated by these analytical challenges, significant research efforts have been dedicated to double-phase problems, leading to a variety of remarkable contributions concerning existence, regularity, qualitative properties of minimizers and solutions, as well as related variational phenomena.

This analysis started with the pioneering works of Marcellini~\cite{Marcellini1989, Marcellini1991} on functionals with non-standard growth conditions. Subsequent developments focused on the interplay between the H\"older continuity rate of the coefficient $a(\cdot)$ and the ratio $q/p$, which plays a decisive role in higher integrability, regularity, and the presence of the Lavrentiev phenomenon, addressed in~\cite{ELM2004, FMM2004, CM2015, CM2015_2, BaCoMin2016, BaCoMin2018}. In these works, the authors obtained sharp regularity for minimizers of double-phase functionals under the critical assumption
\begin{align}
\label{eq:pqa}
\quad a(\cdot) \in C^{0,\alpha}(\Omega), \ \text{for} \ \alpha \in (0,1], \ \text{and} \ \, \frac{q}{p} \le 1 + \frac{\alpha}{n}.
\end{align}
More precisely, they proved that when~\eqref{eq:pqa} holds, local minimizers enjoy locally H\"older continuous gradients, in particular, $\nabla u \in C^{0,\beta}_{\text{loc}}(\Omega)$ for some $\beta \in (0,1)$. Inspired by this framework, the subsequent regularity theory for double-phase problems has generated a rich and rapidly expanding literature, for instance, one could mention the very interesting papers~\cite{ BBO2021, Byun2017Cava, CM2016, FO2019, FM2019, FP2019, FM2021, F2022, FM2023, FM2023_ARMA, FP2024, FMM2004, ELM2004, FS1999}. Although this list of references is far from complete, it clearly demonstrates that this has been a highly active area of research in recent years. 

%Besides the regularity theory for minimizers, extensive work has been dedicated to establishing structural assumptions in various double-phase settings, with particular emphasis on the regularity of modulating coefficients $a(\cdot)$, growth exponents $p, q$, and the absence of the Lavrentiev phenomenon. In this direction, Colombo and Mingione~\cite{CM2015_2} established regularity for bounded minimizers under the assumption $a \in C^{0,\alpha}$ together with the balanced condition $q \le p+\alpha$. This result was subsequently extended in~\cite{BGS2022}, where the H\"older continuity assumption $a \in C^{0,\alpha}(\overline{\Omega})$ was combined with the relaxed condition $q \le p+\alpha \max\left\{1,\frac{p}{n}\right\}$. More recently, Borowski~\cite{Borowski2025} considered higher-order H\"older coefficients $a \in C^{k,\alpha}$, yielding the corresponding generalized gap condition $q \le p+(k+\alpha) \max\left\{1,\frac{p}{n}\right\}$. We also refer the reader to the survey~\cite{MR2021} for a comprehensive overview and related results. 

Aiming to relax the standard H\"older continuity assumption~\eqref{eq:pqa} and extend to more general classes of functionals, Adamadze, Diening, Kopaliani, and Ok~\cite{ADKO2026} introduced a so-called ``Muckenhoupt-type condition'' directly into a class of double-phase integrands. In a parallel direction concerning weighted settings, our recent work~\cite{NT26} established the regularity theory that allows the modulating coefficient $a(\cdot)$ to belong to an appropriate class of Muckenhoupt weights. These developments considerably enlarge the regularity theory for non-uniformly elliptic variational problems beyond the classical framework \eqref{eq:pqa} typically assumed in the existing literature. 

To give the reader a feeling for the idea behind this new structural assumption, let us now be a bit more precise and discuss the technique developed in \cite{ADKO2026}. In this work, the authors introduce the Muckenhoupt-type condition on the integrand $\mathcal{H}$ as follows:
\begin{align}
\label{main-cond}
\|\chi_B\|_{L^{\mathcal{H}}(\mathbb{R}^n)} \|\chi_B\|_{L^{\mathcal{H}^*}(\mathbb{R}^n)} \le C_{\mathcal{H}} |B|, \mbox{ for every ball } B \subset \mathbb{R}^n.
\end{align}
In this condition, $\chi_B$ denotes the indicator function of the ball $B \subset \mathbb{R}^n$, and $|B|$ represents its Lebesgue measure. The notations $L^{\mathcal{H}}(\mathbb{R}^n)$ and $L^{\mathcal{H}^*}(\mathbb{R}^n)$ stand for the generalized Musielak-Orlicz spaces associated with the double-phase function $\mathcal{H}$ and its conjugate $\mathcal{H}^*$, respectively, equipped with the standard Luxemburg norms (see below for precise definitions and notation). Here, the constant $C_{\mathcal{H}} > 0$ is a structural constant depending only on $\mathcal{H}$ and the dimension~$n$. A notable feature of this structural condition is its ability to generalize the classical assumption \eqref{eq:pqa} imposed on the modulating coefficient. In \cite[Example 2.11]{ADKO2026}, the authors present an example clearly demonstrating that the assumption \eqref{main-cond} is not restricted to H\"older continuous coefficients. Instead, it admits a larger class of coefficients, including those generated by natural operations, such as pointwise minima or maxima, under which H\"older continuity is generally lost. Therefore, condition \eqref{main-cond} should be viewed as an extended structural assumption that covers the traditional H\"older setting and a wide range of coefficients falling outside the existing double-phase regularity theory.

Despite its considerable flexibility, the Muckenhoupt-type structural assumption is not without certain limitations. It should be noted that the fundamental inequalities developed in this framework remain valid only under suitable constraints, such as normalization or coupling conditions. For instance, the generalized Jensen inequality holds only for functions $f \in L^{\mathcal{H}}(\mathbb{R}^n)$ satisfying $\|f\|_{L^{\mathcal{H}}(\mathbb{R}^n)} \le 1$ (see \cite[Corollary 2.16]{ADKO2026}); likewise, the Sobolev-Poincar\'e inequality is valid in $W^{1,\mathcal{H}}(B)$ only provided that $\int_{B} \mathcal{H}(x,|\nabla f|) dx \le 1$ (see \cite[Theorem 2.23]{ADKO2026}). These constraints arise quite naturally from the Muckenhoupt-type structure, and one cannot expect the corresponding regularity results to be established on the whole function spaces without imposing additional normalization conditions. As a result, the derivation of local boundedness and H\"older continuity requires a series of intricate technical arguments. For this reason, developing regularity theory under this Muckenhoupt-type structural condition is substantially more challenging if one attempts to follow the classical proof strategy. In particular, proving the Harnack inequality, a key ingredient in establishing the oscillation decay and hence the H\"older continuity of local minimizers, becomes considerably more involved.

This paper establishes the Harnack inequality for non-negative local minimizers of the double-phase functional~\eqref{eq-main}. Furthermore, our analysis is carried out assuming that the integrand $\mathcal{H}$ satisfies \eqref{main-cond}. To put our contribution into perspective, we recall that the classical approach initiated by Baroni, Colombo, and Mingione~\cite{BaCoMin2015} and subsequently employed by Byun and Oh~\cite{BO20}, the Harnack inequality for non-negative local minimizers is derived via the coefficient-freezing technique. However, this strategy relies essentially on the H\"older continuity of the modulating coefficient $a$, and therefore cannot be applied directly under more general structural assumptions. As such, in this setting, it requires a fundamentally different approach and a novel proof strategy. In this work, we developed a new technique based on the self-improving property of the integral average of the double-phase function $\mathcal{H}$ over local balls. Specifically, this method is primarily developed to overcome the difficulties in deriving the supremum estimate, one of the two fundamental ingredients in the proof of the strong Harnack inequality. The methodology addressed in this paper provides a new perspective for establishing Harnack-type inequalities for more general variational functionals under Muckenhoupt-type structural conditions, without relying on the classical coefficient-freezing strategy based on the H\"older continuity of the modulating coefficient and the balance condition on the growth exponents. Furthermore, we expect that the ideas developed in this work can be further adapted to investigate the regularity theory of a larger class of non-uniformly elliptic variational problems. 

For the reader's convenience, let us include here the definitions of the relevant function spaces. Given $k \ge 1$, the Musielak-Orlicz space $L^{\mathcal{H}}(\Omega,\mathbb{R}^k)$, is defined by the set of all measurable functions $f: \Omega \to \mathbb{R}^k$ such that $\int_\Omega \mathcal{H}(x, |f(x)|) dx$ is finite, and equipped with the following Luxemburg norm
\begin{align}\notag
\|f\|_{L^{\mathcal{H}}(\Omega,\mathbb{R}^k)} := \inf \left\{ \lambda > 0: \ \int_\Omega \mathcal{H}\big(x, \lambda^{-1}|f(x)|\big) dx \le 1 \right\}.
\end{align}
Specifically, when $k = 1$, we simply write $L^{\mathcal{H}}(\Omega) \equiv L^{\mathcal{H}}(\Omega,\mathbb{R})$. To clarify the structural condition in~\eqref{main-cond}, let us also briefly introduce the conjugate space. We denote by $\mathcal{H}^*$ the conjugate Young function of $\mathcal{H}$, defined as
\begin{align*}
\mathcal{H}^*(x, t) := \sup_{s \ge 0} \{st - \mathcal{H}(x, s)\}, \quad \text{for } x \in \Omega \ \text{ and } t \ge 0.
\end{align*}
The conjugate Musielak-Orlicz space  $L^{\mathcal{H}^*}(\mathbb{R}^n)$ and its associated Luxemburg norm $\|\cdot\|_{L^{\mathcal{H}^*}(\mathbb{R}^n)}$ are defined analogously, with $\mathcal{H}^*$ replacing $\mathcal{H}$ in the above definitions. Consequently, condition~\eqref{main-cond} can be naturally viewed as a Muckenhoupt-type balance condition formulated in terms of a pair of conjugate Musielak-Orlicz spaces. This condition effectively ensures the validity of a generalized H\"older-type inequality, which is a fundamental tool for establishing our estimates in the Musielak-Orlicz setting.

The Musielak-Orlicz-Sobolev space $W^{1,\mathcal{H}}(\Omega)$ is defined by
\begin{align*}
W^{1,\mathcal{H}}(\Omega) = \left\{f \in L^{\mathcal{H}}(\Omega): \ \nabla f \in L^{\mathcal{H}}(\Omega,\mathbb{R}^n)\right\},
\end{align*}
endowed with the norm $\|f\|_{W^{1,\mathcal{H}}(\Omega)} = \|f\|_{L^{\mathcal{H}}(\Omega)} + \|\nabla f\|_{L^{\mathcal{H}}(\Omega, \mathbb{R}^n)}.$ We further $W_0^{1,\mathcal{H}}(\Omega)$ the space $W_0^{1,1}(\Omega) \cap W^{1,\mathcal{H}}(\Omega)$, equipped with the same norm. For later use, throughout this paper, we also introduce the notation
\begin{align}\label{def-bbL}
\mathbb{L}^{\mathcal{H}}(\Omega) & := \left\{f \in L^{\mathcal{H}}(\Omega): \ \|f\|_{\mathcal{H}(\Omega)} \le 1 \right\}, \\
\mathbb{W}^{\mathcal{H}}(\Omega) & := \left\{w \in W^{1,\mathcal{H}}(\Omega): \ \int_{\Omega} \mathcal{H}(x,|\nabla w|) dx \le 1 \right\}.\label{def-bbW}
\end{align}

With the functional space $W^{1, \mathcal{H}}(\Omega)$ at hand, we are now in a position to define the notion of local minimizers associated with the functional in~\eqref{eq-main}. It is worth noting that we deal here with a class of $\mathcal{Q}$-minimizers, a generalization of standard minimizers considered in the literature, as the latter can be recovered by setting $\mathcal{Q} = 1$. 
\begin{definition}[$\mathcal{Q}$-minimizer]
\label{def:Qminimizer}
A function $u \in W^{1,\mathcal{H}}(\Omega)$ is called a local $\mathcal{Q}$-minimizer of the variational integral $\mathbb{F}$ defined in~\eqref{eq-main}, for some constant $\mathcal{Q} \ge 1$, if the following variational formula
\begin{align}\label{var-form}
\mathbb{F}(u,\mathrm{supp}(u-w)) \le \mathcal{Q}\mathbb{F}(w,\mathrm{supp}(u-w)),
\end{align}
holds for all $w \in W_{\mathrm{loc}}^{1,1}(\Omega)$ and $\mathrm{supp}(u-w) \subset \Omega$.
\end{definition}

In the interest of brevity, for any $\lambda \ge 0$ and a measurable function $f: \Omega \to \mathbb{R}$, we denote the level set $\{x\in \Omega:  f(x) > \lambda\}$ simply as $\Omega \cap \{f>\lambda\}$. Furthermore, we use $\mathtt{data}$ to denote a certain set of structural parameters that will affect the constant dependence in our subsequent statements.

Having introduced the functional setting and notation, we are now ready to state the main result of this paper, namely, the Harnack inequality for local $\mathcal{Q}$-minimizers of the functional $\mathbb{F}$ under the Muckenhoupt-type structural assumption ~\eqref{main-cond}. This is a twofold achievement, as it simultaneously constructs the weak Harnack inequality and the local supremum estimate, whose proofs are non-trivial and require new arguments. The precise statement is given in the following theorem.
\begin{theorem}\label{theo-Harnack}
Let $u \in W_{\mathrm{loc}}^{1,\mathcal{H}}(\Omega)\cap L_{\mathrm{loc}}^{\infty}(\Omega)$ be a non-negative local $\mathcal{Q}$-minimizer of the functional $\mathbb{F}$ defined in~\eqref{eq-main} for some $\mathcal{Q} \ge 1$, under the assumption~\eqref{main-cond}. Fix $x_0 \in \Omega$ and let $r \in (0,1]$ be such that $B_{18r}(x_0) \Subset \Omega$. If $u \in \mathbb{W}^{\mathcal{H}}(B_{18r}(x_0))$, (see~\eqref{def-bbW}), then there exists a constant $C = C\big(\mathtt{data}, (a)_{B_{18r}(x_0)},\|u\|_{L^{\infty}(B_{18r}(x_0))}\big) \ge 1$ such that
\begin{equation}\label{ineq-Harnack}
\sup_{B_r(x_0)} u \le C \inf_{B_r(x_0)} u.
\end{equation}
\end{theorem}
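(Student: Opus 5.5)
The Harnack inequality \eqref{ineq-Harnack} will follow from two estimates: a local supremum estimate and a weak Harnack inequality. Both will be proved by De Giorgi iteration, with all constants allowed to depend on $\mathtt{data}$, $(a)_{B_{18r}(x_0)}$ and $M:=\|u\|_{L^\infty(B_{18r}(x_0))}$. Specifically, I would show that for some small exponent $s_0>0$, and with $\mathcal{H}^-_{B}(t):=t^p+(a)_{B}t^q$ denoting the averaged integrand on a ball $B$,
\begin{align*}
\sup_{B_r(x_0)} u \le C\Big(\fint_{B_{2r}(x_0)} u^{s_0}\,dx\Big)^{1/s_0}, \qquad \Big(\fint_{B_{2r}(x_0)} u^{s_0}\,dx\Big)^{1/s_0}\le C\inf_{B_r(x_0)} u .
\end{align*}
Combining these gives \eqref{ineq-Harnack}.

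\textbf{Step 1 (Caccioppoli inequality).} Test the $\mathcal{Q}$-minimality \eqref{var-form} with $w=u\mp\eta^q(u-k)_\pm$, where $\eta$ is a cutoff between $B_\rho$ and $B_\sigma$. Then use \eqref{cond-F} and the standard hole-filling lemma to absorb the gradient term on the annulus. This yields
\[
\int_{B_\rho}\mathcal{H}(x,|\nabla(u-k)_\pm|)\,dx\le C\int_{B_\sigma}\mathcal{H}\Big(x,\frac{(u-k)_\pm}{\sigma-\rho}\Big)dx ,
\]
with no Hölder continuity of $a$ used. Since $u\in\mathbb{W}^{\mathcal{H}}(B_{18r})$, every truncation $(u-k)_\pm$, localized by a cutoff, has modular at most a constant. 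After dividing by a fixed constant depending on $M$ and $\mathtt{data}$, it lies in the normalized class where the Sobolev–Poincaré inequality \cite[Theorem 2.23]{ADKO2026} and the Jensen inequality \cite[Corollary 2.16]{ADKO2026} are available.

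\textbf{Step 2 (supremum estimate).} This is the main obstacle. In the classical setting one freezes $a$ and compares $\mathcal{H}$ to $t^p$ or $t^q$ on small balls, which requires $a\in C^{0,\alpha}$. Instead, I would use the Muckenhoupt condition \eqref{main-cond} in its modular form: on each ball $B\subset B_{18r}$, for normalized $f$,
\[
\mathcal{H}^-_B\Big(\fint_B|f|\Big)\lesssim \fint_B \mathcal{H}(x,|f|)\,dx .
\]
From \eqref{main-cond} I would also extract a reverse inequality of the type $\fint_B\mathcal{H}(x,t)\,dx\le C\,\mathcal{H}^-_B(t)$ for $t$ in a bounded range. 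Here $t\le M/(\sigma-\rho)$, which is exactly where the dependence on $M$ enters. I expect this "self-improving" comparison of averages of $\mathcal{H}$ to be the delicate point.

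With these two comparisons, the right-hand side of the Caccioppoli inequality is bounded by $\mathcal{H}^-_{B_\sigma}$ applied to level-set quantities. Feeding it into the Sobolev–Poincaré inequality with a gain exponent $\theta>1$ gives the recursion $Y_{j+1}\le C b^j Y_j^{1+\varepsilon}$ for $Y_j=\fint_{B_{\rho_j}}\mathcal{H}^-\big((u-k_j)_+\big)$. The balls here shrink from $B_{2r}$ to $B_r$, with levels $k_j\uparrow 2k$. The fast geometric convergence lemma then gives $\sup_{B_r}u\le C\big(\fint_{B_{2r}} u^p\big)^{1/p}$, up to the frozen constant $(a)_{B_{18r}}$ with ratios $(a)_{B}/(a)_{B_{18r}}$ controlled by the doubling property implied by \eqref{main-cond}. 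A standard interpolation/iteration argument lowers the exponent $p$ to any $s_0>0$.

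\textbf{Step 3 (weak Harnack).} Work with $(u-k)_-$. First, the Caccioppoli inequality combined with the De Giorgi isoperimetric lemma, again using the averaged integrand, gives a measure-to-pointwise lemma: if $|\{u\ge k\}\cap B_{2r}|\ge\nu|B_{2r}|$, then $u\ge\delta k$ on $B_r$. Here $\delta$ depends on $\nu$ and the data. Second, an expansion of positivity and a Krylov–Safonov covering argument, carried out over balls inside $B_{18r}$ (which is why the large enlargement factor is used), give the estimate $|\{u>t\}\cap B_{2r}|\le C(\inf_{B_r}u/t)^{\varepsilon}|B_{2r}|$. Integrating this in $t$ yields the weak Harnack inequality with $s_0<\varepsilon$. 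Chaining with Step 2 completes the proof.
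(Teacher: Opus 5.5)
Your overall scheme matches the paper's: Caccioppoli; a weak Harnack inequality from a measure-to-pointwise lemma, a Krylov--Safonov covering and the layer-cake formula; then a De Giorgi supremum bound followed by lowering the exponent. Step~3 matches Lemma~\ref{lem-lambda-0} and Theorem~\ref{theo-weak-Har}, except that the paper imports the measure-to-pointwise lemma from \cite[Lemma 3.6]{ADKO2026}. The gap is in Step~2.

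\textbf{The iterated quantity.} With your definition $\mathcal{H}^-_B(t)=t^p+(a)_Bt^q$ one has $\fint_B\mathcal{H}(x,t)\,dx=\mathcal{H}^-_B(t)$ identically. So the ``reverse inequality'' you flag as the delicate point carries no information. More seriously, $Y_j=\fint\mathcal{H}^-_B((u-k_j)_+)\,dx$ freezes $a$ inside a non-constant function, while Caccioppoli and Sobolev--Poincar\'e only control $\fint\mathcal{H}(x,(u-k_j)_+)\,dx$. These two are not comparable, because $a$ need not be pointwise comparable to $(a)_B$: in the H\"older regime that \eqref{main-cond} is meant to cover, $a$ may vanish on $B_r$ while $(a)_{B_{18r}}>0$. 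Nor does \eqref{main-cond} make $(a)_B$ doubling. What \eqref{main-cond} plus H\"older's inequality gives is $|E|/|B|\le C\|\chi_E\|_{L^{\mathcal{H}}(\mathbb{R}^n)}/\|\chi_B\|_{L^{\mathcal{H}}(\mathbb{R}^n)}$ for $E\subset B$. Hence $\mathcal{H}_{B'}(t)$ and $\mathcal{H}_B(t)$ are comparable on comparable balls only when $t\le\|\chi_B\|^{-1}_{L^{\mathcal{H}}(\mathbb{R}^n)}$. So $Y_{j+1}\le Cb^jY_j^{1+\varepsilon}$ does not follow from the tools you list.

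The paper instead iterates the genuine modular $U_k=\fint_{B^k}\mathcal{H}(x,v_k)\,dx$ and evaluates the averaged function only at constants. Lemma~\ref{lem-Jensen}, applied to $2^{-(k+1)}\Lambda\chi_{S^{k+1}}\le v_k$, gives $|S^{k+1}|/|B^{k+1}|\le C2^{k+1}\Lambda^{-1}\mathcal{H}_{B_{\varpi r}}^{-1}(U_k)$. Combined with H\"older and the $\theta$-gain in \eqref{SP-ineq}, this yields $U_{k+1}\le c\,\Pi^{k+1}[\mathcal{H}_{B_{\varpi r}}^{-1}(U_k)]^{(\theta-1)/\theta}U_k$. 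That recursion has the non-power nonlinearity $\Psi=\mathcal{H}_{B_{\varpi r}}^{-1}$, which is why Lemma~\ref{lem:DeGiorgi} is needed. Its output, Lemma~\ref{lem-sup-Har}, bounds $\sup_{B_{\sigma r}}u$ by $(\varpi-\sigma)^{-\widetilde\theta}\mathcal{H}_{B_{\varpi r}}^{-1}\big(\fint_{B_{\varpi r}}\mathcal{H}(x,u)\,dx\big)$, not by an $L^p$ norm.

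\textbf{The missing reverse H\"older step.} The self-improving ingredient you are missing is the reverse H\"older inequality of Lemma~\ref{lem:Re-Holder}. Without it the passage to a Lebesgue norm fails. Indeed, $\fint\mathcal{H}(x,u)\,dx$ contains $\fint a\,u^q\,dx$, which cannot be bounded by $C(M,(a)_B)\fint u^p\,dx$, or by any power of it, from $u\le M$ alone, because $a$ may concentrate where $u$ is large.

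The paper writes $\fint_{B_{\varpi r}}\mathcal{H}(x,u)\,dx\le\fint_{B_{\varpi r}}(u/\sup_{B_{\varpi r}}u)^{\alpha}\mathcal{H}(x,\sup_{B_{\varpi r}}u)\,dx$, then applies H\"older and \eqref{phi-eps} at the constant $t=\sup_{B_{\varpi r}}u$. This is admissible only when $\mu:=\|\chi_{B_{\varpi r}}\|_{L^{\mathcal{H}}(\mathbb{R}^n)}\sup_{B_{\varpi r}}u\le1$. Otherwise one rescales $u$ by $\mu$, which Lemma~\ref{lem-norm-chi} and $r\le1$ bound in terms of $\mathtt{data}$, $(a)_{B_{2r}}$ and $M$. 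That is where $M$ enters, not through a range $t\le M/(\sigma-\rho)$.

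Your gradient-scale value $M/(\sigma-\rho)\ge M/(2r)$ (with your radii $\rho<\sigma\le 2r$) generally leaves the admissible range as $r\to0$. That range is $t\le\|\chi_{B}\|^{-1}_{L^{\mathcal{H}}(\mathbb{R}^n)}\le|B|^{-1/p}$ for a ball $B$ of radius between $r$ and $2r$, and it is violated for small $r$ when, for example, $p>n$.

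Young's inequality then gives $\sup_{B_{\sigma r}}u\le\frac12\sup_{B_{\varpi r}}u+C(\varpi-\sigma)^{-\gamma}\big(\fint_{B_{2r}}u^{\alpha}\,dx\big)^{1/\alpha}$ for every $\alpha\in(0,p)$, and Lemma~\ref{lem-Giuti} concludes.
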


The remainder of this paper is organized as follows. Our arguments draw inspiration from preceding work by Baroni, Colombo, and Mingione in~\cite{BaCoMin2015}, combined with fundamental inequalities established in~\cite{ADKO2026}. The next section is devoted to recalling some known results coupled with the structural assumption~\eqref{main-cond} that are necessary in our subsequent analysis. In Section~\ref{sec:main}, we present the proof of Theorem~\ref{theo-Harnack}.  

\section{Preliminary estimates}
\label{sec:pre}

In this section, we collect some necessary and pivotal estimates that will be used throughout the paper. We begin with two fundamental lemmas regarding the scaling properties of the double-phase function $\mathcal{H}$, whose proofs are verified by direct calculations. 
\begin{lemma}\label{lem-H}
For all $x \in \mathbb{R}^n$ and $\lambda, t \ge 0$, the following inequality holds:
\begin{align}\label{ineq-H-1}
\min\{\lambda^p; \lambda^{q}\} \mathcal{H}(x,t) \le \mathcal{H}(x, \lambda t) \le \max\{\lambda^p; \lambda^{q}\} \mathcal{H}(x,t).
\end{align}
Furthermore, let 
\begin{align}\label{def-PHI}
\mathcal{H}_B(t) = \fint_B \mathcal{H}(x,t)dx, \quad t \in [0,\infty),
\end{align}
denote the integral average of $\mathcal{H}$ over a ball $B \subset \Omega$. Then, $\mathcal{H}_B$ inherits the same scaling property:
\begin{align}\notag
\min\{\lambda^p; \lambda^{q}\} \mathcal{H}_B(t) \le \mathcal{H}_B(\lambda t) \le \max\{\lambda^p; \lambda^{q}\} \mathcal{H}_B(t),
\end{align}
for every $\lambda, t \ge 0$, and the corresponding inverse function $\mathcal{H}_B^{-1}$ satisfies
\begin{align}\label{HB-1}
\min\left\{\lambda^{\frac{1}{p}}; \lambda^{\frac{1}{q}}\right\} \mathcal{H}_B^{-1}(t) \le \mathcal{H}_B^{-1}(\lambda t) \le \max\left\{\lambda^{\frac{1}{p}}; \lambda^{\frac{1}{q}}\right\} \mathcal{H}_B^{-1}(t).
\end{align}
\end{lemma}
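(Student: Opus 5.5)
The statement to prove is Lemma~\ref{lem-H}: the two-sided scaling bound for $\mathcal{H}$, its transfer to the average $\mathcal{H}_B$, and the corresponding bound \eqref{HB-1} for the inverse $\mathcal{H}_B^{-1}$. All three parts reduce to elementary monotonicity facts, so I will argue directly from the definition \eqref{def-Hxt}.

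\textbf{Step 1: pointwise scaling of $\mathcal{H}$.} Fix $x\in\mathbb{R}^n$ and $\lambda,t\ge0$. From \eqref{def-Hxt},
\begin{align*}
\mathcal{H}(x,\lambda t)=\lambda^p t^p+\lambda^q a(x)t^q .
\end{align*}
Both summands are non-negative because $a\ge0$, and $\min\{\lambda^p;\lambda^q\}\le \lambda^p,\lambda^q\le \max\{\lambda^p;\lambda^q\}$. Bounding each coefficient $\lambda^p$, $\lambda^q$ from below by the minimum and from above by the maximum, and then factoring, gives \eqref{ineq-H-1}.

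\textbf{Step 2: the averaged function $\mathcal{H}_B$.} The factors $\min\{\lambda^p;\lambda^q\}$ and $\max\{\lambda^p;\lambda^q\}$ do not depend on $x$. Averaging \eqref{ineq-H-1} over $B$, using linearity and monotonicity of $\fint_B$, yields the same bounds for $\mathcal{H}_B$. For finiteness we need $\mathcal{H}_B(t)<\infty$. This holds whenever $a\in L^1(B)$, which is implicit in condition \eqref{main-cond}, since $\|\chi_B\|_{L^{\mathcal{H}}}<\infty$ requires $\int_B a<\infty$.

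\textbf{Step 3: the inverse $\mathcal{H}_B^{-1}$.} First we check that $\mathcal{H}_B$ is a bijection of $[0,\infty)$. It is continuous, by dominated convergence on compact $t$-intervals with integrable majorant $1+a$ times a constant. It is strictly increasing, because $t^p$ is. Finally $\mathcal{H}_B(0)=0$ and $\mathcal{H}_B(t)\ge t^p\to\infty$. Hence $\mathcal{H}_B^{-1}$ is well defined and increasing.

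Now fix $\lambda,t\ge0$; the case $\lambda=0$ is trivial. Set $s=\mathcal{H}_B^{-1}(t)$ and $\mu=\max\{\lambda^{1/p};\lambda^{1/q}\}$. By Step~2,
\begin{align*}
\mathcal{H}_B(\mu s)\ge \min\{\mu^p;\mu^q\}\,t .
\end{align*}
We claim $\min\{\mu^p;\mu^q\}\ge\lambda$. If $\lambda\ge1$, then $\mu=\lambda^{1/p}\ge1$, so the minimum is $\mu^p=\lambda$. If $\lambda<1$, then $\mu=\lambda^{1/q}\le1$, so the minimum is $\mu^q=\lambda$. Therefore $\mathcal{H}_B(\mu s)\ge\lambda t$, and applying the increasing map $\mathcal{H}_B^{-1}$ gives the upper bound $\mathcal{H}_B^{-1}(\lambda t)\le\mu\,\mathcal{H}_B^{-1}(t)$.

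The lower bound is symmetric. Take $\nu=\min\{\lambda^{1/p};\lambda^{1/q}\}$ and use $\mathcal{H}_B(\nu s)\le\max\{\nu^p;\nu^q\}\,t=\lambda t$, which by the same case analysis on $\lambda\ge1$ versus $\lambda<1$ gives $\nu\,\mathcal{H}_B^{-1}(t)\le\mathcal{H}_B^{-1}(\lambda t)$.

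\textbf{Main difficulty.} There is no genuine obstacle. The only points that need care are the well-definedness of $\mathcal{H}_B^{-1}$ (finiteness and continuity of $\mathcal{H}_B$) and getting the min/max exponents right when inverting.
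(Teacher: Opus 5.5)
Your proof is correct and takes the approach the paper intends. The paper gives no written proof and says only that the lemma is ``verified by direct calculations''; your termwise scaling, averaging over $B$, and inversion via the case split $\lambda \ge 1$ versus $\lambda < 1$ is that calculation, and you also check something the paper leaves implicit: that $\mathcal{H}_B$ is a continuous, strictly increasing bijection of $[0,\infty)$, using $a \in L^1(B)$, which does follow from the finiteness of $\|\chi_B\|_{L^{\mathcal{H}}(\mathbb{R}^n)}$ under \eqref{main-cond}.
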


The next lemma provides an upper bound for the Luxemburg norm of characteristic functions $\chi_B$ associated with any ball $B \subset \mathbb{R}^n$ in terms of their Lebesgue measure and the averaged modulating coefficient. This estimate, together with the Muckenhoupt-type condition~\eqref{main-cond}, provides the fundamental control of characteristic functions, which will play a crucial role in our setting.
\begin{lemma}\label{lem-norm-chi}
For every ball $B$ in $\mathbb{R}^n$, we have
\begin{align}\label{est-chi-B-norm}
\|\chi_B\|_{L^{\mathcal{H}}(\mathbb{R}^n)} \le  2^{\frac{1}{p}} \max \left\{|B|^{\frac{1}{p}}; \, |B|^{\frac{1}{q}} (a)_B^{\frac{1}{q}} \right\}.
\end{align}
\end{lemma}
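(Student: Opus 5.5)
We prove Lemma~\ref{lem-norm-chi} by exhibiting an explicit admissible $\lambda$ in the definition of the Luxemburg norm. Set
\[
\lambda_0 := 2^{\frac1p}\max\left\{|B|^{\frac1p};\,|B|^{\frac1q}(a)_B^{\frac1q}\right\}.
\]
It suffices to show $\int_{\mathbb{R}^n}\mathcal{H}(x,\lambda_0^{-1}\chi_B)\,dx \le 1$, since then $\|\chi_B\|_{L^{\mathcal{H}}(\mathbb{R}^n)}\le\lambda_0$ by the definition of the infimum. (If $a\equiv0$ on $B$, the second entry of the maximum is zero and the computation below still applies, with the $q$-term vanishing.)

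Since $\mathcal{H}(x,0)=0$, the integral reduces to one over $B$, and by the definition~\eqref{def-Hxt},
\[
\int_{B}\mathcal{H}(x,\lambda_0^{-1})\,dx = |B|\lambda_0^{-p} + \lambda_0^{-q}\int_B a\,dx = |B|\lambda_0^{-p} + |B|(a)_B\lambda_0^{-q}.
\]
We bound the two terms separately, each by $\tfrac12$.

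For the first term, the choice of $\lambda_0$ gives $\lambda_0^p \ge 2|B|$, so $|B|\lambda_0^{-p}\le \tfrac12$.

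For the second term, $\lambda_0^q \ge 2^{q/p}|B|(a)_B \ge 2|B|(a)_B$, where the last inequality uses $q\ge p$ and hence $2^{q/p}\ge 2$. Therefore $|B|(a)_B\lambda_0^{-q}\le\tfrac12$.

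Adding the two bounds gives a total of at most $1$, which proves the claim. No step here is a real obstacle. The only point needing care is keeping the factor $2^{1/p}$ in the $q$-term, where it contributes $2^{q/p}\ge2$ precisely because $p\le q$.
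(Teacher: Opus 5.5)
Your proof is correct and is essentially the paper's computation run in the opposite direction. The paper evaluates the modular at $\lambda_B=\|\chi_B\|_{L^{\mathcal{H}}(\mathbb{R}^n)}$, uses the identity $1=|B|\left[\lambda_B^{-p}+\lambda_B^{-q}(a)_B\right]\le 2|B|\max\{\lambda_B^{-p};\lambda_B^{-q}(a)_B\}$ and splits into cases. You instead verify directly that the explicit candidate $\lambda_0$ is admissible, which relies only on the definition of the infimum and avoids needing the modular to equal $1$ at the norm (an identity that uses continuity in $\lambda$ and $(a)_B<\infty$).
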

\begin{proof}
For every ball $B$ in $\mathbb{R}^n$, we have $\int_B \mathcal{H}\left(x,\lambda_B^{-1} \chi_B(x) \right) dx = 1$, where $\lambda_B = \|\chi_B\|_{L^{\mathcal{H}}(\mathbb{R}^n)}$. Thus, we can estimate
\begin{align}\notag 
1 = |B|\left[\lambda_B^{-p} + \lambda_B^{-q} (a)_B\right] \le 2|B| \max\left\{\lambda_B^{-p}; \lambda_B^{-q} (a)_B\right\},
\end{align}
which directly implies~\eqref{est-chi-B-norm}.
\end{proof}

\begin{remark}
The estimate~\eqref{est-chi-B-norm} highlights the interplay between the geometry of the ball and the double-phase structure of the modular. More precisely, the Luxemburg norm of $\chi_B$ is controlled by both the Lebesgue measure of $B$ and the average of the modulating coefficient, $(a)_B$. It reveals a scale-dependent transition in the energy growth: when the growth is dominated by the $p$-th power, the norm scales by $|B|^{1/p}$, whereas the $q$-th power is modulated via the term $|B|^{1/q} (a)_B^{1/q}$, reflecting the non-uniform ellipticity of the functional $\mathbb{F}$.
\end{remark}

We next recall three fundamental analytical tools established in~\cite{ADKO2026} (see Corollary 2.16, Theorem 2.15, and Theorem 2.23 therein). It is worth noting that although these results were originally formulated for cubes, their validity extends naturally to the case of balls by the same arguments, relying only on the doubling property of the underlying measure, and hence will be employed throughout our analysis.

\begin{lemma}\label{lem-Jensen}
Let $\mathcal{H}$ be the double-phase function defined in~\eqref{def-Hxt}, with $1 <p \le q$, where the modulating coefficient $a: \mathbb{R}^n \to [0,\infty)$ satisfies the Muckenhoupt-type condition~\eqref{main-cond}. Then, there exists a constant $C= C(\mathtt{data}) >0$ such that the following inequality
\begin{align}\label{Jen-ineq}
\fint_B \mathcal{H}\left(x,\fint_B |f(y)|dy\right)dx \le  C \fint_B \mathcal{H} \left(x,|f(x)|\right)dx,
\end{align}
holds for every  $f \in \mathbb{L}^{\mathcal{H}}(\mathbb{R}^n)$ and any ball $B \subset \mathbb{R}^n$, where  $\mathbb{L}^{\mathcal{H}}(\mathbb{R}^n)$ is defined by~\eqref{def-bbL}.
\end{lemma}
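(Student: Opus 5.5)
The proof combines the Muckenhoupt-type condition~\eqref{main-cond} with a duality (H\"older) argument, and then uses Lemma~\ref{lem-H} to handle the non-homogeneity of $\mathcal{H}$. Fix a ball $B$ and $f \in \mathbb{L}^{\mathcal{H}}(\mathbb{R}^n)$, and set $t := \fint_B |f|\,dy$. By the generalized H\"older inequality in Musielak--Orlicz spaces, $\int_B |f| \le 2\|f\chi_B\|_{L^{\mathcal{H}}}\|\chi_B\|_{L^{\mathcal{H}^*}}$. Combined with~\eqref{main-cond}, this gives
\begin{align*}
t \le 2 C_{\mathcal{H}} \frac{\|f\chi_B\|_{L^{\mathcal{H}}(\mathbb{R}^n)}}{\|\chi_B\|_{L^{\mathcal{H}}(\mathbb{R}^n)}}.
\end{align*}
Write $\lambda := \|f\chi_B\|_{L^{\mathcal{H}}} \le 1$ and $\lambda_B := \|\chi_B\|_{L^{\mathcal{H}}}$. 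Recall that $\int_B \mathcal{H}(x,\lambda_B^{-1})\,dx = 1$, which is the normalization used in Lemma~\ref{lem-norm-chi}.

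Applying the upper bound in~\eqref{ineq-H-1} with the factor $2C_{\mathcal{H}}\lambda$, we get
\begin{align*}
\fint_B \mathcal{H}(x,t)\,dx \le \max\{(2C_{\mathcal{H}}\lambda)^p,(2C_{\mathcal{H}}\lambda)^q\}\fint_B \mathcal{H}(x,\lambda_B^{-1})\,dx \le (2C_{\mathcal{H}})^q \lambda^p \frac{1}{|B|},
\end{align*}
where we used $\lambda\le1$ so that $\lambda^q\le\lambda^p$.

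It remains to bound $\lambda^p$ by the modular. Since $\lambda \le 1$, the Luxemburg norm satisfies $\int_B \mathcal{H}(x,|f|/\lambda)\,dx = 1$ when $\lambda>0$; the case $\lambda=0$ is trivial. The lower bound in~\eqref{ineq-H-1} then gives
\begin{align*}
\int_B \mathcal{H}(x,|f|)\,dx \ge \min\{\lambda^p,\lambda^q\}\int_B\mathcal{H}(x,|f|/\lambda)\,dx = \lambda^q.
\end{align*}
Here the direction is unfavorable, since this yields $\lambda^q$ rather than $\lambda^p$. To fix this, I would redo the first step with the exponent $q$ kept: the max factor is $(2C_{\mathcal{H}})^q\lambda^p$ only when $2C_{\mathcal{H}}\lambda\le1$. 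Instead, I would bound the factor by $\max\{s^p,s^q\}$ with $s = 2C_{\mathcal{H}}\lambda$, and note that $\max\{s^p,s^q\}\le (2C_{\mathcal{H}})^q \lambda^p$ is wasteful. The better route is to avoid the scaling comparison entirely, using convexity of $\mathcal{H}(x,\cdot)$ together with $\mathcal{H}(x,0)=0$:
\begin{align*}
\mathcal{H}(x,\theta s)\le\theta\,\mathcal{H}(x,s) \quad \text{for } \theta\in[0,1].
\end{align*}
Set $s_0 := 2C_{\mathcal{H}}\lambda_B^{-1}$ and $\theta := t/s_0 \le \lambda \le 1$. Then
\begin{align*}
\fint_B\mathcal{H}(x,t)\,dx \le \lambda\fint_B\mathcal{H}(x,s_0)\,dx \le \lambda(2C_{\mathcal{H}})^q|B|^{-1}.
\end{align*}
Similarly, convexity gives $\mathcal{H}(x,|f|)=\mathcal{H}(x,\lambda\cdot|f|/\lambda)\ge$\dots, which again points the wrong way. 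The correct identity to use is that for $\lambda\le1$ the modular dominates the norm: $\int\mathcal{H}(x,|f|)\ge\lambda$. This holds because $\lambda\mapsto\int\mathcal{H}(x,|f|/\lambda)$ is convex-decreasing, and $\mathcal{H}(x,|f|/\lambda)\le\lambda^{-1}\mathcal{H}(x,|f|)$ follows from convexity with $\lambda\le1$. Hence $1\le\lambda^{-1}\int_B\mathcal{H}(x,|f|)$, i.e.\ $\lambda\le\int_B\mathcal{H}(x,|f|)\,dx$. Combining the two displays gives~\eqref{Jen-ineq} with $C=(2C_{\mathcal{H}})^q$.

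The main obstacle is this last step: matching the homogeneity of the norm-to-modular comparison. Using convexity, with linear rather than power scaling in $\lambda$ on both sides, resolves the $p$ versus $q$ mismatch, and it is precisely where the normalization $f\in\mathbb{L}^{\mathcal{H}}$ is essential. A secondary point is to verify the generalized H\"older inequality with constant $2$ for the Luxemburg norms of $\mathcal{H}$ and $\mathcal{H}^*$; this is standard from Young's inequality $st\le\mathcal{H}(x,s)+\mathcal{H}^*(x,t)$.
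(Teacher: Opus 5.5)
There is a genuine gap, and it is exactly at the step you flag as the main obstacle. Your first part is correct. The generalized H\"older inequality together with \eqref{main-cond} gives $t\le 2C_{\mathcal{H}}\lambda/\lambda_B$. This is the \emph{norm} bound $\|\chi_B\fint_B|f|\,dy\|_{L^{\mathcal{H}}(\mathbb{R}^n)}\le 2C_{\mathcal{H}}\|f\chi_B\|_{L^{\mathcal{H}}(\mathbb{R}^n)}$, and from it $\fint_B\mathcal{H}(x,t)\,dx\le(2C_{\mathcal{H}})^q\lambda^p|B|^{-1}$ follows correctly.

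The closing claim $\lambda\le\int_B\mathcal{H}(x,|f|)\,dx$ is false. For $0<\lambda\le1$, convexity and $\mathcal{H}(x,0)=0$ give $\mathcal{H}(x,s/\lambda)\ge\lambda^{-1}\mathcal{H}(x,s)$, which is the opposite of what you wrote. Hence $\int_B\mathcal{H}(x,|f|)\,dx\le\lambda\int_B\mathcal{H}(x,|f|/\lambda)\,dx=\lambda$: below norm one, the modular is dominated by the norm, not conversely. The only lower bound available is $\int_B\mathcal{H}(x,|f|)\,dx\ge\lambda^q$ from \eqref{ineq-H-1}. Already for $a\equiv0$ one has $\int_B\mathcal{H}(x,|f|)\,dx=\lambda^p<\lambda$. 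So what your argument actually proves is
\[
\fint_B\mathcal{H}\Big(x,\fint_B|f|\,dy\Big)\,dx\le(2C_{\mathcal{H}})^q\,|B|^{-1}\Big(\int_B\mathcal{H}(x,|f|)\,dx\Big)^{p/q}.
\]
This is strictly weaker than \eqref{Jen-ineq}, because the modular is at most $1$. Switching from the scaling bound to the convexity bound replaces $\lambda^p$ by the larger quantity $\lambda$, so it only makes matters worse.

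The missing ingredient is not a better norm-to-modular conversion but a stronger, multi-level form of the Muckenhoupt condition. A single H\"older inequality turns \eqref{main-cond} into a norm estimate. A norm estimate gives a modular estimate linear in $\int_B\mathcal{H}(x,|f|)\,dx$ only if it holds uniformly for the rescaled functions $\epsilon^{-1}\mathcal{H}$, $\epsilon\in(0,1]$. Indeed, with $\epsilon:=\int_B\mathcal{H}(x,|f|)\,dx$ one has $\|f\chi_B\|_{L^{\epsilon^{-1}\mathcal{H}}}=1$, and your first step run for $\epsilon^{-1}\mathcal{H}$ in place of $\mathcal{H}$ would yield \eqref{Jen-ineq} exactly.

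Write $(\mathcal{H}^*)_B(s):=\fint_B\mathcal{H}^*(x,s)\,dx$. The Muckenhoupt bound for $\epsilon^{-1}\mathcal{H}$ on $B$ is equivalent to $\mathcal{H}_B^{-1}(s)\,(\mathcal{H}^*)_B^{-1}(s)\ge s/K$ at the level $s=\epsilon/|B|$. By contrast, \eqref{main-cond} is precisely this statement at the single level $s=1/|B|$. Extending it to all $s\in(0,1/|B|]$ is the range selected by the normalization $f\in\mathbb{L}^{\mathcal{H}}(\mathbb{R}^n)$. That extension does not follow from convexity or from \eqref{ineq-H-1}; it needs the deeper self-improving structure of the condition.

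The paper does not prove this lemma. It imports it from \cite[Corollary 2.16]{ADKO2026}, where it appears as a corollary right after the reverse H\"older-type result recorded here as Lemma~\ref{lem:Re-Holder}. A complete proof along your lines would have to supply this multi-level extension.
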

\begin{lemma}\label{lem:Re-Holder}
Let $\mathcal{H}$ be the double-phase function defined in~\eqref{def-Hxt}, with $1 <p \le q$, where the modulating coefficient $a: \mathbb{R}^n \to [0,\infty)$ satisfies the Muckenhoupt-type condition~\eqref{main-cond}. Then, there exist constants $\theta = \theta(\mathtt{data})>1$ and $C = C(\mathtt{data})>0$ such that
\begin{align}\label{phi-eps}
\left(\fint_B \big[\mathcal{H} \left(x,t\right)\big]^{\theta}dx\right)^{\frac{1}{\theta}} \le  C \fint_B \mathcal{H}(x,t)dx,
\end{align}
for any ball $B \subset \mathbb{R}^n$ and $0< t \le \left(\|\chi_B\|_{L^{\mathcal{H}}(\mathbb{R}^n)}\right)^{-1}$. 
\end{lemma}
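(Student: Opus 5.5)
The statement to prove is the last one displayed, Lemma~\ref{lem:Re-Holder}: a reverse H\"older inequality for $x\mapsto\mathcal{H}(x,t)$ on balls, valid at small heights $t$. Since $\mathcal{H}(x,t)=t^p+a(x)t^q$ and $t^p$ is constant on $B$, the estimate reduces to a reverse H\"older inequality for the weight $a$ itself, restricted to balls and heights where the $q$-phase matters. Indeed, by Minkowski,
\[
\Big(\fint_B \mathcal{H}(x,t)^\theta dx\Big)^{1/\theta}\le t^p + t^q\Big(\fint_B a^\theta dx\Big)^{1/\theta}.
\]
So it suffices to show
\[
t^q\Big(\fint_B a^\theta\Big)^{1/\theta}\le C\big(t^p+t^q (a)_B\big)\quad\text{whenever } t\le \|\chi_B\|_{L^{\mathcal{H}}}^{-1}.
\]

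\textbf{Step 1: Muckenhoupt-type information on $a$.} First I extract from \eqref{main-cond} a quantitative bound on $a$ over each ball. Lemma~\ref{lem-norm-chi}, together with its two-sided version (the modular identity gives $1=|B|[\lambda_B^{-p}+\lambda_B^{-q}(a)_B]$), gives $\|\chi_B\|_{L^{\mathcal{H}}}$ explicitly. For $\|\chi_B\|_{L^{\mathcal{H}^*}}$ I would use that
\[
\mathcal{H}^*(x,s)\simeq \min\{s^{p'},\,a(x)^{-\frac{1}{q-1}}s^{q'}\}.
\]
This gives a lower bound for $\|\chi_B\|_{L^{\mathcal{H}^*}}$ through the averages of $\min\{1,(\cdot)a^{-1/(q-1)}\}$. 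Plugging both into \eqref{main-cond} yields, on balls where the $q$-phase dominates, that is where $t_B:=\lambda_B^{-1}$ satisfies $t_B^{q}(a)_B\gtrsim t_B^p$, an $A_{q'}$-type condition for $a$ at scale $t_B$, roughly
\[
(a)_B\,\Big(\fint_B \min\{t_B^{q-p}a,\,1\}^{-\frac{1}{q-1}}\Big)^{q-1}\lesssim 1.
\]
Equivalently, the truncated weight $w_B:=\max\{a,\,t_B^{p-q}\}$ satisfies an $A_{q}$-type bound on $B$. I also expect the condition to be inherited by all subballs $B'\subset B$ with the corresponding heights.

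\textbf{Step 2: reverse H\"older for the truncated weight.} Next I show that $w=\max\{a,t^{p-q}\}$ satisfies the classical $A_\infty$ reverse H\"older inequality with uniform constants. Run the Gehring/Calder\'on--Zygmund argument on the dyadic subcubes of $B$ (covering by balls via doubling), at the fixed height $t\le t_B$. For subballs $B'$ one has $t_{B'}\ge t_B\ge t$, since $\|\chi_{B'}\|\le\|\chi_B\|$. Hence $t$ is an admissible height on every subball, and the Muckenhoupt-type bound from Step~1 holds uniformly for $w$ on all subballs. The standard proof that $A_q$ implies reverse H\"older then gives
\[
\Big(\fint_B w^\theta\Big)^{1/\theta}\le C\fint_B w,
\]
with $\theta,C$ depending only on $\mathtt{data}$.

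\textbf{Step 3: conclusion.} Finally, $\mathcal{H}(x,t)\simeq t^p+t^q w(x)\simeq t^q w(x)$, because $t^q w\ge t^p$. The reverse H\"older inequality for $w$ therefore transfers directly to \eqref{phi-eps}.

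\textbf{Main obstacle.} The hard step is Step~1, obtaining a clean $A_q$-type condition for the truncated weight uniformly on all subballs at the fixed height $t$. The restriction $t\le\|\chi_B\|^{-1}$ is exactly what should make this work: monotonicity of $\|\chi_{B'}\|$ in $B'$ is the key point. If the direct computation with $\mathcal{H}^*$ is messy, I would instead use the equivalent formulation via boundedness of averaging operators, $\|(\fint_B|f|)\chi_B\|_{L^{\mathcal{H}}}\lesssim\|f\|_{L^{\mathcal{H}}}$, tested on $f=\chi_E$ for $E\subset B$. This gives the $A_\infty$ property $|E|/|B|\le \epsilon\Rightarrow \int_E\mathcal{H}(x,t)\le\eta\int_B\mathcal{H}(x,t)$, from which reverse H\"older follows by Gehring's lemma.
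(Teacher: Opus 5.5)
The paper gives no proof of this lemma. It quotes it from \cite{ADKO2026} (Theorem 2.15, stated for cubes) and only remarks that the ball version follows by the same argument. Your proposal is therefore a genuinely different, self-contained route tailored to the double-phase structure. You read \eqref{main-cond} on each ball $B'$, at its own height $t_{B'}=\|\chi_{B'}\|_{L^{\mathcal H}}^{-1}$, as a single-ball $A_q$ bound for the truncated weight $w_t=\max\{a,t^{p-q}\}$. You then move it to the common height $t$ and apply the local form of ``uniform $A_q$ on the subcubes of $Q_0$ $\Rightarrow$ reverse H\"older on $Q_0$''. Finally you conclude from $t^qw_t\le\mathcal H(\cdot,t)\le 2t^qw_t$. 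This is correct in substance. It shows exactly why $t\le\|\chi_B\|_{L^{\mathcal H}}^{-1}$ is the right range, and it proves the ball version rather than asserting it; the citation buys brevity and the generality of \cite{ADKO2026}. Three points need repair.

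\textbf{The Step 1 display.} It is false as written. Take $a\equiv A_0$ with $A_0$ large, which satisfies \eqref{main-cond} with a constant independent of $A_0$, and a ball in the $q$-dominant regime. Then your display reads $A_0\lesssim 1$. You normalized by $(a)_B$ instead of $t_B^{q-p}(a)_B$, and truncated from above instead of from below.

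Write $[w]_{A_q(B')}:=\big(\fint_{B'}w\big)\big(\fint_{B'}w^{-\frac1{q-1}}\big)^{q-1}$, $b=t_B^{q-p}a$ and $A=(b)_B\ge1$. In the $q$-dominant regime, $|B|^{-1}\simeq t_B^q(a)_B$ and $\tau_B:=(|B|t_B)^{-1}\simeq t_B^{q-1}(a)_B$. Combining $\int_B\mathcal H^*(x,\tau_B/C_{\mathcal H})\,dx\le 1$ with $\mathcal H^*(x,s)\simeq\min\{s^{p'},a^{-\frac1{q-1}}s^{q'}\}$ gives $\fint_B\min\{A^{\frac1{p-1}},(A/b)^{\frac1{q-1}}\}\,dx\lesssim1$. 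Since $\fint_B\max\{b,1\}\le 2A$ and $\min\{b^{-\frac1{q-1}},1\}\le A^{-\frac1{q-1}}\min\{(A/b)^{\frac1{q-1}},A^{\frac1{p-1}}\}$, this yields $[w_{t_B}]_{A_q(B)}\lesssim1$. In the $p$-dominant regime $[w_{t_B}]_{A_q(B)}\le 2$ trivially. So your verbal conclusion is right.

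\textbf{Your main obstacle.} It is not an inheritance from $B$; a single-ball $A_q$ bound says nothing about subballs. Instead, apply the above to each $B'$ at height $t_{B'}$ and then lower the height. For $v=\max\{a,\lambda\}$ and $\lambda'\ge\lambda$ one has $[\max\{v,\lambda'\}]_{A_q(B')}\le[v]_{A_q(B')}+1$. This follows from $\fint_{B'}\max\{v,\lambda'\}\le\fint_{B'}v+\lambda'$ and $\fint_{B'}\max\{v,\lambda'\}^{-\frac1{q-1}}\le\min\{\fint_{B'}v^{-\frac1{q-1}},(\lambda')^{-\frac1{q-1}}\}$. Since $t\le t_{B'}$ means $t^{p-q}\ge t_{B'}^{p-q}$, you get $[w_t]_{A_q(B')}\lesssim 1$ for every ball with $t_{B'}\ge t$, which is exactly what Step 2 uses. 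The averaging-operator fallback is then unnecessary. As sketched it would in any case only relate the single heights $t_E$ and $t_B$, not give a modular bound at a fixed $t$.

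\textbf{Cubes versus balls.} This passage needs more than doubling of Lebesgue measure. Let $Q_0$ be the cube circumscribing $B$. Its dyadic subcubes lie in balls $B_Q$ of comparable measure with $B_Q\subset\sqrt{n}\,B$, but possibly $B_Q\not\subset B$. So monotonicity of $\|\chi_{\cdot}\|_{L^{\mathcal H}}$ alone does not give $t\le t_{B_Q}$. You need $\|\chi_{\sqrt{n}\,B}\|_{L^{\mathcal H}}\le K\|\chi_B\|_{L^{\mathcal H}}$ with $K=2n^{n/2}C_{\mathcal H}$. This follows from \eqref{main-cond} on $\sqrt{n}\,B$, monotonicity of $\|\chi_{\cdot}\|_{L^{\mathcal H^*}}$, and H\"older's inequality $|B|\le 2\|\chi_B\|_{L^{\mathcal H}}\|\chi_B\|_{L^{\mathcal H^*}}$. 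Then $t/K\le t_{B_Q}$ and $w_t\le w_{t/K}\le K^{q-p}w_t$, so the uniform $A_q$ bound for $w_t$ holds on every $B_Q$, hence on every subcube of $Q_0$. Returning from $Q_0$ to $B$ uses the doubling of $w_t$, which its $A_q$ bound on $Q_0$ provides.
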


\begin{lemma}\label{lem-Sob-Poin}
Let $\mathcal{H}$ be the double-phase function defined in~\eqref{def-Hxt}, with $1 <p \le q$, where the modulating coefficient $a: \mathbb{R}^n \to [0,\infty)$ satisfies the Muckenhoupt-type condition~\eqref{main-cond}. Let $B_r \Subset \Omega$ for some $r>0$. Then, there exist constants $\theta=\theta(\mathtt{data})>1$ and $C=C(\mathtt{data})>0$ such that for every $u \in \mathbb{W}^{\mathcal{H}}(B_r)$, the following inequality holds
\begin{align}\label{SP-ineq}
\fint_{B_r} \mathcal{H}\left(x,\frac{|u-(u)_{B_r}|}{r}\right)dx \le \left[\fint_{B_r} \mathcal{H}^{\theta}\left(x,\frac{|u-(u)_{B_r}|}{r}\right)dx\right]^{\frac{1}{\theta}} \le C \fint_{B_r} \mathcal{H}\left(x,|\nabla u|\right)dx.
\end{align}
\end{lemma}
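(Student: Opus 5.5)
The statement has two inequalities, and they call for different arguments. The left one is purely measure-theoretic. Since $\theta>1$ and $\fint_{B_r}$ is a probability average, Hölder's (or Jensen's) inequality for the convex power $s\mapsto s^{\theta}$ gives $\fint_{B_r} g\,dx\le (\fint_{B_r} g^{\theta}dx)^{1/\theta}$ for the nonnegative function $g=\mathcal{H}(x,|u-(u)_{B_r}|/r)$. This holds for any $\theta\ge 1$, so all the work goes into the right-hand inequality.

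For the right-hand inequality I would follow the strategy of \cite[Theorem 2.23]{ADKO2026}, adapted from cubes to balls through doubling. The plan has four steps.

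\textbf{Step 1: pointwise potential estimate.} Start from the classical bound
\begin{align*}
\frac{|u(x)-(u)_{B_r}|}{r}\le C\sum_{k\ge 0}2^{-k}\fint_{B^{k}(x)}|\nabla u|\,dy, \qquad \text{a.e. } x\in B_r.
\end{align*}
Here $B^{k}(x)$ are balls of radius comparable to $2^{-k}r$, obtained from a chain of balls in $B_r$ joining $B_r$ to $x$. This is the dyadic (telescoping Lebesgue point) form of the Riesz potential estimate.

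\textbf{Step 2: push $\mathcal{H}$ through the sum.} The weights $2^{-k}$ are summable. Renormalizing them to sum to one, using convexity of $t\mapsto\mathcal{H}(x,t)$, and controlling the normalizing constant by the scaling in Lemma~\ref{lem-H}, I obtain
\begin{align*}
\mathcal{H}\Big(x,\frac{|u-(u)_{B_r}|}{r}\Big)\le C\sum_{k}2^{-k}\,\mathcal{H}\Big(x,\fint_{B^k(x)}|\nabla u|\Big).
\end{align*}

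\textbf{Step 3: admissibility of the averages.} The normalization $u\in\mathbb{W}^{\mathcal{H}}(B_r)$ means that $f=|\nabla u|\chi_{B_r}$ satisfies $\|f\|_{L^{\mathcal{H}}}\le 1$, so $f\in\mathbb{L}^{\mathcal{H}}(\mathbb{R}^n)$. By the Hölder inequality in the pair $L^{\mathcal{H}},L^{\mathcal{H}^*}$ combined with \eqref{main-cond}, every average satisfies
\begin{align*}
t_k:=\fint_{B^k}|f|\le \frac{2\|\chi_{B^k}\|_{L^{\mathcal{H}^*}}}{|B^k|}\le \frac{2C_{\mathcal{H}}}{\|\chi_{B^k}\|_{L^{\mathcal{H}}}}.
\end{align*}
By Lemma~\ref{lem-H}, each $t_k$ is therefore admissible, up to a constant factor, in the reverse Hölder inequality of Lemma~\ref{lem:Re-Holder}. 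Moreover, Lemma~\ref{lem-Jensen} applies to $f$.

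\textbf{Step 4: from pointwise to integral (the main obstacle).} I now take the $L^{\theta}(B_r)$ norm of the right-hand side of Step 2. The difficulty is that $x$ sits both in the weight $\mathcal{H}(x,\cdot)$ and in the ball $B^k(x)$, so Lemmas~\ref{lem-Jensen} and \ref{lem:Re-Holder}, which are formulated with a fixed ball, cannot be applied directly. To decouple the two occurrences, I would replace the moving balls $B^k(x)$ by a dyadic (Whitney-type) family of balls $\{B_j\}$ of comparable size and bounded overlap, each with $B^k(x)\subset B_j$ for the relevant $x$. On each $B_j$, with the fixed level $t_j=\fint_{B_j}|f|$, I apply Lemma~\ref{lem:Re-Holder} to get
\begin{align*}
\Big(\fint_{B_j}\mathcal{H}(x,t_j)^{\theta}dx\Big)^{1/\theta}\le C\fint_{B_j}\mathcal{H}(x,t_j)\,dx,
\end{align*}
and then Lemma~\ref{lem-Jensen} to bound $\fint_{B_j}\mathcal{H}(x,t_j)\,dx\le C\fint_{B_j}\mathcal{H}(x,|\nabla u|)\,dx$. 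This turns each piece into $\mathcal{H}$-averages of $|\nabla u|$ over balls. Summing over $j$ and $k$, I would use the geometric factor $2^{-k}$, the bounded overlap, and the boundedness of the Hardy–Littlewood maximal operator on $L^{\theta}$, in the form of a fractional-integral or maximal estimate with exponent gain. If necessary I would shrink $\theta$ to the smaller of the exponents from Lemma~\ref{lem:Re-Holder} and from the maximal step. The result is
\begin{align*}
\Big[\fint_{B_r}\mathcal{H}^{\theta}\Big(x,\frac{|u-(u)_{B_r}|}{r}\Big)dx\Big]^{1/\theta}\le C\fint_{B_r}\mathcal{H}(x,|\nabla u|)\,dx,
\end{align*}
with $C$ depending only on $\mathtt{data}$. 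I expect the bookkeeping in this decoupling step, keeping every level admissible while summing over scales, to be the hardest part.
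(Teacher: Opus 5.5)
The paper gives no argument of its own here. The left inequality is H\"older's inequality, and the right one is imported from \cite[Theorem 2.23]{ADKO2026}, which is stated for cubes, with the remark that balls follow by doubling. Your proposal takes a different route: you derive the right inequality from the other two imported tools, Lemma~\ref{lem-Jensen} and Lemma~\ref{lem:Re-Holder}, through a dyadic potential estimate. This gives a self-contained proof, needs no separate cube-to-ball transfer for the Sobolev--Poincar\'e inequality, and makes $\theta$ explicit. Steps 1--3 are correct as written.

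In Step 4, however, drop the boundedness of the maximal operator on $L^{\theta}$. That would require $\mathcal{H}(\cdot,|\nabla u|)\in L^{\theta}$, which is not available for an arbitrary $u\in\mathbb{W}^{\mathcal{H}}(B_r)$. The gain comes instead from the size of the balls. Set $f=|\nabla u|\chi_{B_r}$ and $G=\mathcal{H}(\cdot,f)$. Take disjoint dyadic cubes $Q_j\subset B_j$ at scale $2^{-k}r$ and $t_j=\fint_{B_j}f$. Your reverse H\"older and Jensen steps, together with $\sum_j a_j^{\theta}\le(\sum_j a_j)^{\theta}$ and bounded overlap, give
\[
\int_{B_r}\Big(\sum_j\chi_{Q_j}\mathcal{H}(x,t_j)\Big)^{\theta}dx\le C\sum_j|B_j|^{1-\theta}\Big(\int_{B_j}G\,dy\Big)^{\theta}\le C\,(2^{-k}r)^{n(1-\theta)}\Big(\int_{B_r}\mathcal{H}(y,|\nabla u|)\,dy\Big)^{\theta}.
\]
Minkowski's inequality in $L^{\theta}$ over $k$ then yields the claim with the factor $\sum_{k\ge0}2^{-k(1-n(\theta-1)/\theta)}$, which is finite exactly when $\theta<n/(n-1)$. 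Hence any $\theta<\min\{\theta_{RH},\,n/(n-1)\}$ works, where $\theta_{RH}$ is the exponent of Lemma~\ref{lem:Re-Holder}. The normalization $\int_{B_r}\mathcal{H}(x,|\nabla u|)\,dx\le1$ is used only to make Lemmas~\ref{lem-Jensen} and~\ref{lem:Re-Holder} applicable.
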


\begin{remark}
We emphasize that the combination of the Muckenhoupt condition~\eqref{main-cond} and the doubling property of $\mathcal{H}$ is what ensures the boundedness of $\mathbf{M}$. Then, the boundedness of the maximal operator $\mathbf{M}$ implies the density of smooth functions in $L^{\mathcal{H}}(\Omega)$ and $W^{1,\mathcal{H}}(\Omega)$, which in turn guarantees the absence of the Lavrentiev phenomenon in the corresponding variational setting. The readers may refer to the works~\cite[Section 2.8]{ADKO2026},~\cite[Section 4]{CM2015} for detailed discussions. 
\end{remark}

\section{Proofs of main results}
\label{sec:main}

This section is devoted to the proof of Theorem~\ref{theo-Harnack} under the specific structural assumption~\eqref{main-cond}. Inequality~\eqref{ineq-Harnack} provides a quantitative relation between the local infimum and supremum of non-negative local $\mathcal{Q}$-minimizers of the multi-phase functional $\mathbb{F}$, allowing us to control their oscillation at small scales. The argument is based on a two-step scheme, consisting of a weak Harnack (infimum) estimate and a corresponding supremum estimate, established in Theorems~\ref{theo-weak-Har} and~\ref{theo-sup-Har}. The full Harnack inequality will then follow by a standard combination of these two estimates. 

\subsection{Caccioppoli-type inequality}

We first establish a Caccioppoli-type inequality, which provides a fundamental energy estimate and serves as a key preparatory step for the subsequent arguments. Lemma~\ref{lem-Giuti} is a version of  Giusti's iteration lemma (see, e.g.,~\cite{Giusti2003}), a well-known technical result which plays a central role in deriving quantitative decay estimates. 

\begin{lemma}[Giusti's iteration lemma]
\label{lem-Giuti}
Given $\gamma>0$, $\sigma \in (0,1)$ and $0<r<R$. Let $K: (0,\infty) \to [0,\infty)$ be a function satisfying
\begin{align}\label{Giuti-cond-2}
K(s t) \le s^{-\gamma} K(t), \quad \mbox{ for all } t>0 \mbox{ and } 0<s \le 1.
\end{align}
Assume moreover that a non-decreasing function $G: [r,R] \to [0,\infty)$ satisfies
\begin{align}\label{Giuti-cond}
G(\varrho) \le \sigma G(\rho) + K(\rho-\varrho), \ \mbox{ for all } r \le \varrho < \rho \le R.
\end{align}
Then, there exists a constant $C = C(\gamma, \sigma)>0$ such that
$G(r) \le C K(R - r)$.
\end{lemma}
%\begin{proof}
%Let us fix $\epsilon = \frac{\sigma^{\frac{1}{\gamma}} + 1}{2}$. Since $\sigma \in (0,1)$, one can easily check that $\epsilon \in (0,1)$ and $0< \sigma \epsilon^{-\gamma} <1$. Hence, there exists a constant $C_0 = C_0(\gamma, \sigma)>0$ such that
%$\sum_{j=0}^{\infty} (\sigma \epsilon^{-\gamma})^j \le C_0.$
%We now introduce an increasing sequence $\{t_j\}_{j=0}^{\infty} \subset [r, R]$ defined by $t_0 = r$ and
%\begin{align}\notag
%t_{j+1}= t_j + \epsilon^j (1-\epsilon) (R-r), \quad j \ge 0. 
%\end{align}
%For every $j \ge 0$, substituting $\varrho = t_j$ and $\rho = t_{j+1}$ into~\eqref{Giuti-cond}, and then applying assumption~\eqref{Giuti-cond-2} with $s = \epsilon^j (1-\epsilon)$, it yields
%\begin{align}\notag
%G(t_j) & \le \sigma G(t_{j+1}) + \epsilon^{-j\gamma} (1-\epsilon)^{-\gamma} K\big(R-r\big).\notag
%\end{align}  
%By induction argument, it yields
%\begin{align}\label{Giuti-3}
%G(t_0) \le \sigma^k G(t_{k}) + (1-\epsilon)^{-\gamma} K\big(R-r\big) \sum_{j=0}^{k-1} (\sigma \epsilon^{-\gamma})^j \le \sigma^k G(t_{k}) + C_0(1-\epsilon)^{-\gamma} K\big(R-r\big).
%\end{align}
%Combining the monotonicity of $G$ with~\eqref{Giuti-3}, we arrive at
%\begin{align}\notag
%G(r) \le \sigma^k G(R) + C_0 (1-\epsilon)^{-\gamma} K\big(R-r\big).
%\end{align}
%Since $0<\sigma<1$ and $G(R)<\infty$, letting $k\to \infty$ in both sides, one obtains~\eqref{Giuti-ineq}. The proof is then complete.
%\end{proof}

\begin{lemma}\label{lem-Cacci}
Let $u \in W_{\mathrm{loc}}^{1,\mathcal{H}}(\Omega)$ be a non-negative local $\mathcal{Q}$-minimizer of the functional $\mathbb{F}$ defined in~\eqref{eq-main}, for some $\mathcal{Q} \ge 1$. Let $0<r < R$ be such that $B_R \Subset \Omega$.  Then, there exists a constant $C = C(\mathtt{data})>0$ such that
\begin{align}\label{Cacci-ineq-0}
\int_{B_{r}} \mathcal{H}(x,|\nabla u|) dx & \le C \int_{B_{R}} \mathcal{H}\left(x, \frac{|u - \lambda|}{R - r}\right)dx,\\
\label{Cacci-ineq}
\int_{B_{r}} \mathcal{H}(x,|\nabla (u - \lambda)_{\pm}|) dx & \le C \int_{B_{R}} \mathcal{H}\left(x, \frac{(u - \lambda)_{\pm}}{R - r}\right)dx,
\end{align}
for all $\lambda \in \mathbb{R}$. Here, notations $(\cdot)_{+}$ and $(\cdot)_{-}$ denote the positive and negative parts, respectively, namely, $(u-\lambda)_{+} = \max\{u-\lambda, 0\} \mbox{ and } (u-\lambda)_{-} = \max\{\lambda - u, 0\}.$ 
\end{lemma}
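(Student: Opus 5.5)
The plan is the standard hole-filling argument for quasi-minimizers. It uses only the growth condition \eqref{cond-F}, the scaling property \eqref{ineq-H-1} of Lemma~\ref{lem-H}, and Giusti's iteration Lemma~\ref{lem-Giuti}. The Muckenhoupt condition \eqref{main-cond} is not needed. I will treat \eqref{Cacci-ineq} for $(u-\lambda)_+$; the case $(u-\lambda)_-$ is symmetric, obtained by testing from below. Estimate \eqref{Cacci-ineq-0} follows in the same way with the competitor $w=u-\eta(u-\lambda)$, or by adding the two one-sided estimates, since $|\nabla u|=|\nabla(u-\lambda)_+|+|\nabla(u-\lambda)_-|$ a.e. and $\mathcal{H}(x,\cdot)$ is doubling.

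\textbf{Competitor.} Fix $r\le \varrho<\rho\le R$ and a cut-off $\eta\in C_c^\infty(B_\rho)$ with $0\le\eta\le1$, $\eta\equiv1$ on $B_\varrho$ and $|\nabla\eta|\le 2/(\rho-\varrho)$. Set $w=u-\eta(u-\lambda)_+$. Then $\mathrm{supp}(u-w)\subset A:=B_\rho\cap\{u>\lambda\}$, up to null sets, so \eqref{var-form} and \eqref{cond-F} give
\begin{align*}
\int_{A}\mathcal{H}(x,|\nabla u|)\,dx\le C_{\mathcal{F}}^2\mathcal{Q}\int_{A}\mathcal{H}(x,|\nabla w|)\,dx .
\end{align*}
On $A$ we have
\begin{align*}
\nabla w=(1-\eta)\nabla u-(u-\lambda)_+\nabla\eta .
\end{align*}
Using $\mathcal{H}(x,s+t)\le 2^{q-1}(\mathcal{H}(x,s)+\mathcal{H}(x,t))$ and \eqref{ineq-H-1}, we obtain
\begin{align*}
\int_{A}\mathcal{H}(x,|\nabla w|)\,dx\le C\int_{A\setminus B_\varrho}\mathcal{H}(x,|\nabla u|)\,dx+C\int_{B_R}\mathcal{H}\Big(x,\frac{(u-\lambda)_+}{\rho-\varrho}\Big)\,dx .
\end{align*}
Here $C=C(C_{\mathcal{F}},\mathcal{Q},q)$. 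The first term is nonzero only where $1-\eta\neq0$, which lies outside $B_\varrho$.

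\textbf{Hole-filling.} Write $G(t)=\int_{B_t\cap\{u>\lambda\}}\mathcal{H}(x,|\nabla u|)\,dx$; this equals the integral of $\mathcal{H}(x,|\nabla(u-\lambda)_+|)$ over $B_t$. Then the previous step reads
\begin{align*}
G(\varrho)\le C\,[G(\rho)-G(\varrho)]+K(\rho-\varrho),\qquad K(t):=C\int_{B_R}\mathcal{H}\Big(x,\frac{(u-\lambda)_+}{t}\Big)\,dx .
\end{align*}
Adding $C\,G(\varrho)$ to both sides gives
\begin{align*}
G(\varrho)\le\tfrac{C}{C+1}\,G(\rho)+K(\rho-\varrho).
\end{align*}
So \eqref{Giuti-cond} holds with $\sigma=C/(C+1)<1$, after dividing $K$ by $C+1$. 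By \eqref{ineq-H-1}, for $0<s\le1$ we have $\mathcal{H}(x,\tau/(st))\le s^{-q}\mathcal{H}(x,\tau/t)$, so \eqref{Giuti-cond-2} holds with $\gamma=q$. Moreover, $G$ is non-decreasing and finite because $u\in W^{1,\mathcal{H}}_{\mathrm{loc}}$. Lemma~\ref{lem-Giuti} then yields $G(r)\le C\,K(R-r)$, which is \eqref{Cacci-ineq}.

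\textbf{Main obstacle.} The only delicate point is justifying the competitor. We need $w\in W^{1,1}_{\mathrm{loc}}$, which is clear, and $\mathrm{supp}(u-w)\Subset\Omega$, which holds since $\mathrm{supp}(u-w)\subset \overline{B_\rho}\subset \overline{B_R}\Subset\Omega$. We must also read \eqref{var-form} on the set $A$ rather than on the topological support. This is legitimate because $\nabla w=\nabla u$ a.e. off $A$, so we may reduce to $A$ before dividing out. Everything else is routine.
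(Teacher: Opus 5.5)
The overall scheme is the paper's: a cut-off competitor, hole-filling, and Lemma~\ref{lem-Giuti} with $\gamma=q$. Your pointwise estimates are fine. The gap is the step you flag yourself, and your justification for it fails when $\mathcal{Q}>1$.

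Put $S:=\mathrm{supp}(u-w)$. The integrands agree a.e.\ on $S\setminus A$, but they enter \eqref{var-form} with weight $1$ on the left and weight $\mathcal{Q}$ on the right. So splitting only gives
\begin{align*}
\int_{A}\mathcal{F}(x,u,\nabla u)\,dx\le \mathcal{Q}\int_{A}\mathcal{F}(x,w,\nabla w)\,dx+(\mathcal{Q}-1)\int_{S\setminus A}\mathcal{F}(x,u,\nabla u)\,dx .
\end{align*}
The last term need not vanish. $S$ is the closure of $\{\eta(u-\lambda)_+\neq0\}$. For a general $W^{1,\mathcal{H}}$ function (continuity of $u$ is not available at this stage), $\overline{\{u>\lambda\}}$ can contain a set of positive measure where $u<\lambda$ and $\nabla u\neq0$. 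In particular, ``$\mathrm{supp}(u-w)\subset A$ up to null sets'' is false. The leftover term involves $\nabla u$ on $\{u<\lambda\}$, is not controlled by $(u-\lambda)_+$, and cannot be absorbed. As written, you prove \eqref{Cacci-ineq} only for $\mathcal{Q}=1$, and your summation route to \eqref{Cacci-ineq-0} inherits the gap.

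Enlarging the comparison set, by contrast, is harmless: the added region contributes the same nonnegative amount to both sides, and $\mathcal{Q}\ge1$. This is how the paper obtains $\mathbb{F}(u,B_\rho)\le\mathcal{Q}\,\mathbb{F}(w,B_\rho)$ for $w=u-\eta(u-\lambda)$. Your direct route to \eqref{Cacci-ineq-0}, compared on $B_\rho$, is therefore fine.

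For \eqref{Cacci-ineq} the paper only says ``the same argument'', which tacitly needs the localization you attempted. Comparing on $B_\rho$ instead leaves the unabsorbable term $(\mathcal{Q}-1)\int_{B_\rho\cap\{u\le\lambda\}}\mathcal{F}(x,u,\nabla u)\,dx$.

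What is actually needed is the level-set form $\mathbb{F}(u,\{\varphi\neq0\})\le\mathcal{Q}\,\mathbb{F}(u-\varphi,\{\varphi\neq0\})$ for $\varphi=\eta(u-\lambda)_+$. There are two ways to get it:
\begin{itemize}
\item adopt it as the definition of $\mathcal{Q}$-minimizer; or
\item derive it from \eqref{var-form}. Approximate $\varphi$ by admissible $\varphi_j\to\varphi$ whose closed supports lie in $\{\varphi\neq0\}$ up to null sets and exhaust it in measure, then pass to the limit.
\end{itemize}
Constructing such $\varphi_j$ is a genuine quasicontinuity/capacity argument. It does not follow from the fact that $\nabla w=\nabla u$ off $A$.
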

\begin{proof}
Let us fix $0<r < R$ such that $B_R \Subset \Omega$. For every $\varrho$ and $\rho$ such that $0<r \le \varrho < \rho \le R$, let $\eta \in C_0^{\infty}(B_{\rho})$ be a smooth cut-off function such that $0 \le \eta \le 1$, $\eta \equiv 1 \mbox{ on } B_{\varrho}$, and $|\nabla \eta| \le \frac{2}{\rho-\varrho}$. Taking $w = u - \eta (u-\lambda) \in W^{1,\mathcal{H}}(B_{\rho})$, it is easy to check that 
\begin{align}\label{eta-w-set}
|\nabla w| \le (1-\eta)|\nabla u| + 2\frac{|u-\lambda|}{\rho - \varrho} \mbox{ and }  \mathrm{supp} (u - w) \subset B_{\rho}.
\end{align}
Since $u$ is a local $\mathcal{Q}$-minimizer of functional $\mathbb{F}$, by ~\eqref{var-form} and~\eqref{eta-w-set}, one obtains $\mathbb{F}(u,B_{\rho}) \le \mathcal{Q} \mathbb{F}(w,B_{\rho})$. This can be rewritten as
$\int_{B_{\rho}} \mathcal{F}(x,u,\nabla u) dx \le \mathcal{Q} \int_{B_{\rho}} \mathcal{F}(x,w,\nabla w) dx.$
Applying condition~\eqref{cond-F}, this implies that
\begin{align}\notag
\int_{B_{\rho}} \mathcal{H}(x,|\nabla u|) dx \le C_{\mathcal{F}}^2 \mathcal{Q}\int_{B_{\rho}} \mathcal{H}(x,|\nabla w|) dx.
\end{align}
Taking~\eqref{eta-w-set} into account and applying~\eqref{ineq-H-1} in Lemma~\ref{lem-H}, it yields
\begin{align}
\int_{B_{\rho}} \mathcal{H}(x,|\nabla u|) dx & \le  2^{q} C_{\mathcal{F}}^2 \mathcal{Q} \int_{B_{\rho}\setminus B_{\varrho}} \mathcal{H}(x,|\nabla u|) dx  + 4^q C_{\mathcal{F}}^2 \mathcal{Q} \int_{B_{\rho}} \mathcal{H}\left(x, \frac{|u-\lambda|}{\rho - \varrho}\right)dx. \label{Ca-0}
\end{align}
By adding $2^{q} C_{\mathcal{F}}^2 \mathcal{Q} \int_{B_{\varrho}} \mathcal{H}(x,|\nabla u|) dx$ to both sides of~\eqref{Ca-0} and rearranging the terms, one gets
\begin{align}
\int_{B_{\varrho}} \mathcal{H}(x,|\nabla u|) dx 
%& \le \frac{2^{q} C_{\mathcal{F}}^2 \mathcal{Q}}{1+2^{q} C_{\mathcal{F}}^2 \mathcal{Q}} \int_{B_{\rho}} \mathcal{H}(x,|\nabla u|) dx +  \frac{4^{q} C_{\mathcal{F}}^2 \mathcal{Q}}{1+2^{q} C_{\mathcal{F}}^2 \mathcal{Q}} \int_{B_{\rho}} \mathcal{H}\left(x, \frac{|u-\lambda|}{\rho - \varrho}\right)dx \notag \\
& \le \sigma \int_{B_{\rho}} \mathcal{H}(x,|\nabla u|) dx +  2^{q} \int_{B_{R}} \mathcal{H}\left(x, \frac{|u-\lambda|}{\rho - \varrho}\right)dx, \label{Giu-1}
\end{align}
where $\sigma = \frac{2^{q} C_{\mathcal{F}}^2 \mathcal{Q}}{1+2^{q} C_{\mathcal{F}}^2 \mathcal{Q}}<1$. By~\eqref{Giu-1}, it enables us to apply Lemma~\ref{lem-Giuti} with the following two functions
\begin{align*}
G(t) = \int_{B_{t}} \mathcal{H}(x,|\nabla u|) dx,  \ t \in [r,R], 
 \mbox{ and }  K(t) = 2^q\int_{B_{R}} \mathcal{H}\left(x, \frac{|u-\lambda|}{t}\right)dx, \ t>0.
\end{align*}
Here, by using~\eqref{ineq-H-1} in Lemma~\ref{lem-H}, we remark that $K(s t) \le s^{-q} K(t)$ whenever $0<s \le 1$.
Then, one obtains~\eqref{Cacci-ineq-0}. The proof of~\eqref{Cacci-ineq} follows the same argument by choosing the test function $w = u - \eta (u-\lambda)_{\pm}$.
\end{proof}

\subsection{Weak Harnack inequality}

We shall address the weak Harnack inequality for local non-negative $\mathcal{Q}$-minimizers. 
\begin{lemma}\label{lem-LV-1}
Let $u \in W_{\mathrm{loc}}^{1,\mathcal{H}}(\Omega)$ be a nonnegative local $\mathcal{Q}$-minimizer of the functional $\mathbb{F}$ defined in~\eqref{eq-main} , for some $\mathcal{Q} \ge 1$. Let $\lambda >0$, $0<\sigma<2^{-n}$ and $\widetilde r>0$ be such that $B_{4\widetilde{r}}:=B_{4\widetilde r}(x_0)\Subset\Omega$. Assume that $u \in \mathbb{W}^{\mathcal{H}}(B_{4\widetilde{r}})$ and 
\begin{align}\label{asump-Hol}
|B_{\widetilde{r}} \cap \{u >  \lambda\}| \ge \sigma |B_{\widetilde{r}}|.
\end{align}
Then, there exists a constant $\varepsilon=\varepsilon(\mathtt{data}, \sigma) \in (0,\frac{1}{2})$ such that
\begin{align}\label{inf-u-eps}
\inf_{B_{\widetilde{r}}} u \ge \varepsilon \lambda.
\end{align}
\end{lemma}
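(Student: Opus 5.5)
The plan is the classical De Giorgi two-step scheme for supersolutions: first an expansion of positivity in measure, then a De Giorgi iteration on sub-level sets. Both steps are carried out with the normalized (unit-modular) inequalities of Lemmas~\ref{lem-Jensen}--\ref{lem-Sob-Poin} in place of the $p$-homogeneous ones. Throughout I use the Caccioppoli inequality~\eqref{Cacci-ineq} for $(u-k)_-$. Since $u\ge 0$, we have $(u-k)_-\le k$, so the right-hand side of~\eqref{Cacci-ineq} is bounded by $\int_{B_R\cap\{u<k\}}\mathcal{H}(x,k/(R-r))\,dx$. The normalization $u\in\mathbb{W}^{\mathcal{H}}(B_{4\widetilde r})$ guarantees that every truncation $(u-k)_-$ again has modular at most one on the relevant balls. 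Hence Lemma~\ref{lem-Sob-Poin} applies to it, possibly after multiplying by the constant $C$ from~\eqref{Cacci-ineq}, which is harmless by~\eqref{ineq-H-1}.

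\textbf{Step 1 (shrinking lemma).} Set $k_j=2^{-j}\lambda$ and $A_j=B_{2\widetilde r}\cap\{u<k_j\}$. By~\eqref{asump-Hol}, each $(u-k_j)_-$ vanishes on a set of measure at least $\sigma|B_{\widetilde r}|\ge \sigma 2^{-n}|B_{2\widetilde r}|$. Apply the De Giorgi isoperimetric inequality (in $W^{1,1}$) to the truncation between levels $k_{j+1}$ and $k_j$:
\[
\tfrac{k_j}{2}|A_{j+1}|\le C\widetilde r\int_{A_j\setminus A_{j+1}}|\nabla u|\,dx .
\]
Then pass from $|\nabla u|$ to $\mathcal{H}$ by Jensen's inequality applied to the convex function $\mathcal{H}_B$ of~\eqref{def-PHI}, combined with Lemma~\ref{lem-Jensen} on $B=B_{2\widetilde r}$ to move the averaging of $a(\cdot)$ onto the $t$-variable. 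Combined with the Caccioppoli bound $\int_{B_{2\widetilde r}}\mathcal{H}(x,|\nabla (u-k_j)_-|)\le C\int_{B_{4\widetilde r}}\mathcal{H}(x,k_j/\widetilde r)$ and the scaling of Lemma~\ref{lem-H}, a telescoping/H\"older argument in $j$ should give
\[
|A_{j^*}|\le \nu |B_{2\widetilde r}|
\]
for any prescribed $\nu>0$ once $j^*=j^*(\mathtt{data},\sigma,\nu)$ is large. The ratio $\mathcal{H}_{B_{4\widetilde r}}(k_j/\widetilde r)/\mathcal{H}_{B_{2\widetilde r}}(k_j/\widetilde r)$ is controlled by doubling of $\mathcal{H}_B$ in $B$, which follows from~\eqref{main-cond} and Lemma~\ref{lem:Re-Holder}.

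\textbf{Step 2 (De Giorgi iteration).} With $h=k_{j^*}$, take radii $\rho_i=\widetilde r(1+2^{-i})$, levels $h_i=\tfrac h2(1+2^{-i})$, and $Y_i=|B_{\rho_i}\cap\{u<h_i\}|/|B_{\rho_i}|$. On each step, combine~\eqref{Cacci-ineq} for $(u-h_i)_-$ with the higher integrability in the Sobolev--Poincar\'e inequality~\eqref{SP-ineq}, used with the exponent $\theta>1$ and applied to $\eta(u-h_i)_-$. Chebyshev on $\{u<h_{i+1}\}$, where $(u-h_i)_-\ge h2^{-i-2}$, should then yield
\[
\mathcal{H}_{B}\!\left(\tfrac{h2^{-i}}{\widetilde r}\right)Y_{i+1}\le C\,b^i\,\mathcal{H}_{B}\!\left(\tfrac{h}{\widetilde r}\right)Y_i^{1+\delta}, \qquad \delta=1-\tfrac1\theta .
\]
This uses Lemma~\ref{lem:Re-Holder} to compare $\mathcal{H}(x,t)$ against its average on sets of small measure. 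By~\eqref{ineq-H-1} the $\mathcal{H}_B$ factors cancel up to $2^{qi}$, giving $Y_{i+1}\le Cb^iY_i^{1+\delta}$. Choosing $\nu$ in Step 1 small enough, the fast geometric convergence lemma forces $Y_i\to0$, so $u\ge h/2$ on $B_{\widetilde r}$. This gives~\eqref{inf-u-eps} with $\varepsilon=2^{-j^*-1}$.

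\textbf{Main obstacle.} The hard part is making the modular estimates homogeneous when $\mathcal{H}$ is not. Lemma~\ref{lem:Re-Holder} requires $t\le\|\chi_B\|^{-1}_{L^{\mathcal{H}}}$, whereas the levels $k_j/\widetilde r$ may be large. I would first try to verify this restriction via the normalization: Lemma~\ref{lem-Jensen} together with $\int\mathcal{H}(x,|\nabla u|)\le 1$ and~\eqref{SP-ineq} give $\int_B\mathcal{H}(x,\lambda/\widetilde r)\lesssim 1$, which by Lemma~\ref{lem-norm-chi} is comparable to $\lambda/\widetilde r\lesssim\|\chi_B\|^{-1}$. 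If this fails, for instance when $\lambda$ exceeds $\sup u$, I would reduce to the case $\lambda\le \|u\|_{L^\infty}$, where~\eqref{asump-Hol} is nontrivial. The dependence of the constants on $\sigma$ must be tracked so that $\varepsilon$ depends only on $\mathtt{data}$ and $\sigma$.
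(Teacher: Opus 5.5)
The paper does not run De Giorgi's method here. It notes that $|B_{2\widetilde r}\cap\{u>\lambda\}|\ge 2^{-n}\sigma|B_{2\widetilde r}|$ and quotes the conclusion from Lemma~3.6 of \cite{ADKO2026}. You try to reprove that lemma from scratch, and there is a genuine gap: the argument fails exactly where the double-phase and Muckenhoupt structure enter.

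\textbf{The normalization of the levels.} You justify the restriction $k_j/\widetilde r\lesssim\|\chi_B\|^{-1}_{L^{\mathcal H}(\mathbb{R}^n)}$ by claiming $\int_B\mathcal{H}(x,\lambda/\widetilde r)\,dx\lesssim 1$. This is false.
\begin{itemize}
\item Take $u\equiv M$. It is a local $\mathcal{Q}$-minimizer with $\int\mathcal{H}(x,|\nabla u|)\,dx=0$, and \eqref{asump-Hol} holds with $\lambda=M/2$ for every $\sigma$. Yet $\int_B\mathcal{H}(x,\lambda/\widetilde r)\,dx\ge |B|(M/2\widetilde r)^p\to\infty$ as $M\to\infty$. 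Lemma~\ref{lem-Sob-Poin} controls oscillations, not the size of $\lambda$.
\item Your fallback $\lambda\le\|u\|_{L^\infty}$ is vacuous, since \eqref{asump-Hol} already forces it. Any bound obtained from it involves $\|u\|_{L^\infty}/\widetilde r$, a dependence the statement excludes, and it still gives no comparison with $\|\chi_B\|^{-1}$.
\item The same restriction is hidden in your doubling claim. The ratio $\mathcal{H}_{B_{4\widetilde r}}(t)/\mathcal{H}_{B_{2\widetilde r}}(t)$ is controlled by data only for $t\lesssim\|\chi_{B_{2\widetilde r}}\|^{-1}$. If $a\equiv 0$ on $B_{2\widetilde r}$ but $(a)_{B_{4\widetilde r}}>0$ (admissible, e.g., for bounded H\"older coefficients in the classical range), the ratio equals $1+(a)_{B_{4\widetilde r}}t^{q-p}\to\infty$.
\end{itemize}
A normalization can only be expected at levels where $u$ genuinely oscillates on a fixed fraction of the ball. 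For instance, suppose $|B\cap\{u<k/2\}|\ge\nu|B|$ with $k\le\lambda$. Applying \eqref{SP-ineq} to $\min\{u,k\}$ gives $\int_E\mathcal{H}(x,k/\widetilde r)\,dx\le C$ on a set $E\subset B$ with $|E|\ge c(\nu,\sigma)|B|$. Passing from $E$ to $B$ then needs \eqref{main-cond} itself, through $|E|\le 2\|\chi_E\|_{L^{\mathcal H}}\|\chi_B\|_{L^{\mathcal H^*}}$. This ingredient is absent from your plan, and it covers only levels at which the sublevel set is still large.

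\textbf{The iteration in Step 2 does not close.} Even at normalized levels, defining $Y_i$ by Lebesgue measure does not work.
\begin{itemize}
\item On the left, Chebyshev produces $\fint_{B_{\rho_{i+1}}}\mathcal{H}(x,h2^{-i-2}/\widetilde r)\chi_{\{u<h_{i+1}\}}\,dx$. Bounding this below by $\mathcal{H}_B(h2^{-i}/\widetilde r)Y_{i+1}$ is an $A_1$-type property. It fails whenever $(a)_E\ll(a)_B$ while $(a)_Bt^{q-p}\gg1$, since the ratio of the two sides is $(t^p+(a)_Et^q)/(t^p+(a)_Bt^q)$.
\item Lemma~\ref{lem:Re-Holder} gives only the opposite bound, $\int_E\mathcal{H}(x,t)\,dx\le C(|E|/|B|)^{1-1/\theta}\int_B\mathcal{H}(x,t)\,dx$.
\item On the right, the Caccioppoli term is again the weighted measure of $\{u<h_i\}$, which that lemma bounds only by $Y_i^{1-1/\theta}$. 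Together with the H\"older factor you get $Y_i^{2(1-1/\theta)}$, not $Y_i^{1+\delta}$.
\item The lower comparison that \eqref{main-cond} actually yields carries a power $(|E|/|B|)^q$. With it, the recursion exponent becomes $2(1-\frac1\theta)/q$, which is in general below $1$, so the fast geometric convergence lemma does not apply.
\end{itemize}
The iteration has to be run on weighted quantities such as $\fint_{B_{\rho_i}}\mathcal{H}(x,h/\widetilde r)\chi_{\{u<h_i\}}\,dx/\mathcal{H}_{B_{\rho_i}}(h/\widetilde r)$. For these, both the Chebyshev and Caccioppoli steps are linear, and only the H\"older factor needs converting via \eqref{main-cond}.

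\textbf{Step 1 has the same defect.} After Jensen for $\mathcal{H}_B$, no factor $|A_j\setminus A_{j+1}|^{1-1/p}$ survives. Each level also carries its own normalization $\mathcal{H}_B(k_j/\widetilde r)$, so the pieces do not sum to a contradiction. The ``telescoping/H\"older argument'' you invoke is precisely the part that still has to be built.
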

\begin{proof}
Let $\lambda >0$ and $\sigma \in (0, 2^{-n})$. Setting $\widetilde{\sigma} = 2^{-n}\sigma$, it is clear that $\widetilde{\sigma} \in (0,1)$. Under the assumption~\eqref{asump-Hol}, for $u \in \mathbb{W}^{\mathcal{H}}(B_{4\widetilde{r}})$, it observes that
\begin{align}
|B_{2\widetilde{r}} \cap \{u >  \lambda\}| & \ge |B_{\widetilde{r}} \cap \{u >  \lambda\}|  \ge \sigma |B_{\widetilde{r}}| \ge \widetilde{\sigma} |B_{2\widetilde{r}}|.\notag
\end{align}
At this stage, by applying~\cite[Lemma 3.6]{ADKO2026} with paramater $\tilde{\sigma}$, we conclude that there exists $\varepsilon \in (0,\frac{1}{2})$ such that~\eqref{inf-u-eps} holds. This completes the proof.
\end{proof}

Next, let us invoke the classical covering lemma, originally proposed by Krylov and Safonov~\cite{KS1981}. For the detailed proof, we refer the reader to~\cite{KS2001}.
\begin{lemma}\label{lem-E-Br}
Let $\tilde{\delta} \in (0, 1)$, and let $E \subset B_{2r} \subset \mathbb{R}^n$ be a measurable set. Define the set $E_{\tilde{\delta}}$ as follows:
\begin{align}\notag %\label{E-delta}
E_{\tilde{\delta}} := \bigcup_{\substack{x \in B_{2r}, \, \rho >0}} \left\{ B_{3\rho}(x) \cap B_{2r}: \ |B_{3\rho}(x) \cap E| \ge \tilde{\delta} |B_\rho(x)| \right\}.
\end{align}
Then, either $E_{\tilde{\delta}} = B_{2r}$ or $|E_{\tilde{\delta}}| \ge \frac{1}{2^n \tilde{\delta}} |E|$ holds.
\end{lemma}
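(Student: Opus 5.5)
The plan is to run the standard Calderón–Zygmund/Krylov–Safonov stopping-radius argument. The finite $3r$-Vitali covering lemma replaces the usual $5r$ version, and this is what yields exactly the constant $2^n$. Throughout we assume $E_{\tilde\delta}\neq B_{2r}$ and aim to show $|E|\le 2^n\tilde\delta\,|E_{\tilde\delta}|$.

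\textbf{Step 1: stopping radii.} Let $E'\subset E$ be the set of Lebesgue density points of $E$, so $|E\setminus E'|=0$. For $x\in E'$ and small $\rho$, we have $|B_{3\rho}(x)\cap E|\approx|B_{3\rho}|=3^n|B_\rho|>\tilde\delta|B_\rho(x)|$. Hence the set of admissible radii, namely those $\rho$ with $|B_{3\rho}(x)\cap E|\ge\tilde\delta|B_\rho(x)|$, is nonempty.

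This set is bounded by $4r/3$. Indeed, if $x\in B_{2r}$ and $3\rho>4r$, then $B_{3\rho}(x)\supset B_{2r}$. So an admissible such $\rho$ would force $E_{\tilde\delta}\supset B_{3\rho}(x)\cap B_{2r}=B_{2r}$, contrary to our assumption.

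Set $\rho_x$ to be the supremum of the admissible radii. Both $\rho\mapsto|B_{3\rho}(x)\cap E|$ and $\rho\mapsto|B_\rho|$ are continuous. Taking limits from below and from above therefore gives
\begin{align*}
|B_{3\rho_x}(x)\cap E|=\tilde\delta\,|B_{\rho_x}(x)|, \qquad 3\rho_x\le 4r.
\end{align*}
In particular $\rho_x$ is admissible, so $B_{3\rho_x}(x)\cap B_{2r}\subset E_{\tilde\delta}$.

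\textbf{Step 2: covering with factor $3$.} Fix a compact set $K\subset E'$. The open balls $\{B_{\rho_x}(x)\}_{x\in K}$ cover $K$, so finitely many of them suffice. The elementary finite Vitali lemma (greedy selection by decreasing radius) then yields disjoint balls $B_{\rho_i}(x_i)$, $i=1,\dots,N$, with $K\subset\bigcup_i B_{3\rho_i}(x_i)$. By Step 1,
\begin{align*}
|K|\le\sum_{i=1}^N|B_{3\rho_i}(x_i)\cap E|=\tilde\delta\sum_{i=1}^N|B_{\rho_i}(x_i)|.
\end{align*}

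\textbf{Step 3: geometric lower bound.} For $x_i\in B_{2r}$ and $\rho_i\le 4r/3\le 4r$, the set $B_{\rho_i}(x_i)\cap B_{2r}$ contains a ball of radius $\rho_i/2$. To see this, move from $x_i$ a distance $\rho_i/2$ toward the center when $x_i$ is far from the origin; otherwise the ball $B_{\rho_i/2}(x_i)$ already works. Here we use that for $s\le 2R$ and $x\in B_R$, $B_s(x)\cap B_R$ contains a ball of radius $s/2$. Hence
\begin{align*}
|B_{\rho_i}(x_i)|\le 2^n|B_{\rho_i}(x_i)\cap B_{2r}|.
\end{align*}
The sets $B_{\rho_i}(x_i)\cap B_{2r}$ are pairwise disjoint and contained in $B_{3\rho_i}(x_i)\cap B_{2r}\subset E_{\tilde\delta}$. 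Therefore
\begin{align*}
|K|\le 2^n\tilde\delta\sum_{i=1}^N|B_{\rho_i}(x_i)\cap B_{2r}|\le 2^n\tilde\delta\,|E_{\tilde\delta}|.
\end{align*}

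\textbf{Step 4: conclusion.} Taking the supremum over compact $K\subset E'$ (inner regularity) and using $|E'|=|E|$ gives $|E_{\tilde\delta}|\ge\frac{1}{2^n\tilde\delta}|E|$.

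The main delicate point is obtaining the sharp factor $2^n$ rather than the $(10/3)^n$-type constant an infinite $5r$-covering would produce. This is why we pass to compact subsets and use the finite $3r$-covering lemma, together with the equality at the stopping radius. A secondary check is that $\rho_x$ is admissible, not merely a supremum, which follows from the continuity of both sides in $\rho$.
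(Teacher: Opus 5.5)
Your proof is correct. It is the standard Krylov--Safonov covering argument that the paper cites from Kinnunen--Shanmugalingam without reproducing: density points of $E$, a maximal admissible radius $\rho_x\le 4r/3$ forced by $E_{\tilde\delta}\neq B_{2r}$, a Vitali-type covering, and the bound $|B_{\rho}(x)\cap B_{2r}|\ge 2^{-n}|B_\rho(x)|$; your Euclidean refinements (exact equality $|B_{3\rho_x}(x)\cap E|=\tilde\delta\,|B_{\rho_x}(x)|$ by continuity in $\rho$, and the finite $3r$-covering on compact subsets of $E$) are what give exactly the stated constant $2^n$.
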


\begin{lemma}\label{lem-lambda-0}
Let $u \in W_{\mathrm{loc}}^{1,\mathcal{H}}(\Omega)$ be a nonnegative local $\mathcal{Q}$-minimizer of functional $\mathbb{F}$ defined in~\eqref{eq-main}, with $\mathcal{Q} \ge 1$. Assume that~\eqref{Hu-10r} holds for some $r>0$. Then, there exist constants $C = C(\mathtt{data})>0$ and $\gamma = \gamma(\mathtt{data})>0$ such that
\begin{align}\label{lam-0-est}
\inf_{B_{2r}} u \ge C\left[\frac{|B_{2r} \cap {u > \lambda}|}{|B_{2r}|}\right]^{\gamma} \lambda, \quad \mbox{for every } \lambda>0.
\end{align}
\end{lemma}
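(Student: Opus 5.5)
The plan is the classical Krylov--Safonov ``crawling ink spots'' argument: combine the measure-to-pointwise estimate of Lemma~\ref{lem-LV-1} with the covering Lemma~\ref{lem-E-Br}. I read the unstated hypothesis \eqref{Hu-10r} as the normalization $u\in\mathbb{W}^{\mathcal{H}}(B_{10r})$ with $B_{10r}\Subset\Omega$. Under it, every ball $B_{4\rho}(x)\subset B_{10r}$ inherits the normalization, since the modular $\int_{B_{4\rho}(x)}\mathcal{H}(y,|\nabla u|)\,dy\le 1$. Lemma~\ref{lem-LV-1} is therefore available on every ball $B_{3\rho}(x)$ with $x\in B_{2r}$ and $\rho$ small enough that $B_{12\rho}(x)\subset B_{10r}$. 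Fix $\tilde\delta\in(0,1)$; I will take $\tilde\delta=\sigma 3^{n}$ for a fixed admissible $\sigma<2^{-n}$, or simply $\tilde\delta$ small, with constants fixed at the end. Let $\varepsilon=\varepsilon(\mathtt{data},\sigma)$ be the constant produced by Lemma~\ref{lem-LV-1}.

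\textbf{Step 1 (one generation).} Set $E_k:=B_{2r}\cap\{u>\varepsilon^k\lambda\}$. Take any ball $B_{3\rho}(x)\cap B_{2r}$ appearing in $(E_k)_{\tilde\delta}$, so that $|B_{3\rho}(x)\cap E_k|\ge\tilde\delta|B_\rho(x)|=\tilde\delta 3^{-n}|B_{3\rho}(x)|$. Apply Lemma~\ref{lem-LV-1} with $\widetilde r=3\rho$, level $\varepsilon^k\lambda$ and density $\sigma=\tilde\delta3^{-n}$. This gives $u\ge\varepsilon^{k+1}\lambda$ on $B_{3\rho}(x)$, hence $(E_k)_{\tilde\delta}\subset E_{k+1}$ up to null sets (using a slightly smaller level if one wants strict inequality).

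\textbf{Step 2 (iteration).} By Lemma~\ref{lem-E-Br}, either $E_{k+1}=B_{2r}$ or $|E_{k+1}|\ge (2^n\tilde\delta)^{-1}|E_k|$. Choose $\tilde\delta<2^{-n-1}$, so the growth factor is at least $2$. As long as the second alternative holds, $|E_k|\ge 2^k|E_0|$. Hence there is a first $k_*$ with $k_*\le 1+\log_2(|B_{2r}|/|E_0|)$ such that $E_{k_*}=B_{2r}$, which yields $\inf_{B_{2r}}u\ge\varepsilon^{k_*}\lambda$. Writing
\[
\varepsilon^{k_*}\ge \varepsilon\,\big(|E_0|/|B_{2r}|\big)^{\log_2(1/\varepsilon)},
\]
we obtain \eqref{lam-0-est} with $\gamma=\log_2(1/\varepsilon)$ and $C=\varepsilon$.

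\textbf{Main obstacle: the size of the balls.} The main obstacle is that the balls $B_{3\rho}(x)$ produced by the covering lemma may be large. One needs $B_{12\rho}(x)\Subset\Omega$ together with the normalization, but $\rho$ can be comparable to $r$ (or larger), so $B_{12\rho}(x)$ may leave $B_{10r}$. I would handle this in two ways:
\begin{itemize}
\item[(i)] Note that only $B_{3\rho}(x)\cap B_{2r}$ matters. If $\rho$ exceeds $r$, replace the ball by $B_{2r}$ itself, or by a ball of radius $\sim r$ centered in $B_{2r}$, which still carries density $\gtrsim\tilde\delta$ of $E_k$. Then apply Lemma~\ref{lem-LV-1} on radius $2r$, which only needs $B_{8r}$, and on radius $r$-balls centred in $B_{2r}$, which need $B_{6r}\subset B_{10r}$. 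This is exactly why the hypothesis is placed on $B_{10r}$.
\item[(ii)] Track that shrinking the ball changes the density by a dimensional factor only. This forces $\sigma$, and hence $\varepsilon$ and $\gamma$, to depend only on $\mathtt{data}$.
\end{itemize}
A secondary technical point is the requirement $\sigma<2^{-n}$ in Lemma~\ref{lem-LV-1}. This is ensured by taking $\tilde\delta$ small.
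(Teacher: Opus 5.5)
Your proposal is correct and follows essentially the paper's route: the Krylov--Safonov covering of Lemma~\ref{lem-E-Br} combined with Lemma~\ref{lem-LV-1} applied along geometric levels. The main difference is how the iteration ends. You stop it through the first covering alternative together with a doubling count, whereas the paper defines $k(\lambda)$ by density thresholds and finishes with one application of Lemma~\ref{lem-LV-1} on $B_{2r}$ itself.

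Note that \eqref{Hu-10r} actually assumes $B_{18r}\Subset\Omega$ and $u\in\mathbb{W}^{\mathcal{H}}(B_{18r})$, so all balls with $\rho\le r$ are admissible, as your argument requires. Under your guessed $B_{10r}$ you would have to switch to the $B_{2r}$-replacement already for $\rho>2r/3$, and your density argument covers that case verbatim. Your explicit treatment of large $\rho$ also fills in a step the paper leaves implicit when it simply restricts to $\rho<4r/3$.
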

\begin{proof}
Let $\mu \in (0,1)$ be a constant depending only on $\mathtt{data}$, whose value is chosen to be sufficiently small and will be specified later. For any fixed $\lambda>0$ and $j \in \mathbb{N}$, we introduce the following level sets
\begin{align}\notag
E := B_{2r} \cap \left\{u > \mu^{j-1}\lambda\right\}  \subset B_{2r}.
\end{align}
Fix $\tilde{\delta}$ small enough such that $0<2^n\tilde{\delta}<2^{-n}$. We define the index set $\mathbb{I}_{\tilde{\delta}}$ as the set all of pairs $(x,\rho)  \in B_{2r} \times (0,4r/3)$ such that 
\begin{align}\label{def-Index}
|B_{3\rho}(x) \cap E| \ge \tilde{\delta} |B_\rho(x)|.
\end{align} 
Here, observe that we only consider $\rho<4r/3$ in the set $\mathbb{I}_{\tilde{\delta}}$ since $B_{3\rho}(x) \supset B_{4r}(x) \supset B_{2r}(x_0) \equiv B_{2r} \supset E$ whenever $\rho \ge 4r/3$.  Let us define the following subset of $B_{2r}$:
\begin{align}\notag
 E_{\tilde{\delta}} := \bigcup_{\substack{(x,\rho) \in \mathbb{I}_{\tilde{\delta}}}} \big[ B_{3\rho}(x) \cap B_{2r}\big].
\end{align}
By the virtue of Lemma~\ref{lem-E-Br}, it ensures that either $E_{\tilde{\delta}} = B_{2r}$ or $|E_{\tilde{\delta}}| \ge (2^n\tilde{\delta})^{-1} |E|$ holds true. Moreover, for each pair $(x,\rho) \in \mathbb{I}_{\tilde{\delta}}$, by~\eqref{def-Index} one has
\begin{align}\notag
|B_{3\rho}(x) \cap \{u > \mu^{j-1}\lambda\}| & \ge |B_{3\rho}(x) \cap E|  \ge \tilde{\delta} |B_{\rho}(x)| \ge 3^{-n}\tilde{\delta} |B_{3\rho}(x)|.
\end{align}
Since $\rho<4r/3$ and $x \in B_{2r}(x_0)$, we observe that $B_{12\rho}(x) \subset B_{16r}(x) \subset B_{18r}(x_0) \equiv B_{18r} \Subset \Omega,$ 
which matches the structural assumption~\eqref{Hu-10r}. Therefore, it allows to apply Lemma~\ref{lem-LV-1} with the ball $B_{\widetilde{r}} \equiv B_{3\rho}(x)$, yielding a constant $\varepsilon_2 = \varepsilon_2(\mathtt{data}) \in (0,1)$ such that 
$$\inf_{B_{3\rho}(x)} u \ge \varepsilon_2 \mu^{j-1} \lambda > \mu^{j} \lambda,$$
provided $0< \mu < \varepsilon_2$. Consequently, we obtain the inclusion $E_{\tilde{\delta}} \subset B_{2r} \cap \{u > \mu^j \lambda\}$. This implies that for each $j \in J_\lambda$, either 
\begin{align}\label{TH1}
B_{2r} \cap \{u > \mu^j \lambda\} = B_{2r}
\end{align}
or
\begin{align}\label{TH2}
|B_{2r} \cap \{u > \mu^j \lambda\}| \ge (2^n\tilde{\delta})^{-1} \left|B_{2r} \cap \left\{u > \mu^{j-1}\lambda\right\}\right|
\end{align}
hold. Let $k(\lambda) \ge 1$ be the integer number such that
\begin{align}\label{m-lam}
(2^n\tilde{\delta})^{k(\lambda)}  < \frac{|B_{2r} \cap \{u > \lambda\}|}{|B_{2r}|} \le (2^n\tilde{\delta})^{k(\lambda)-1}.
\end{align}
We divide the proof into two cases depending on the value of $k(\lambda)$. First, if $k(\lambda) = 1$, that means $|B_{2r} \cap \{u > \lambda\}| > 2^n\tilde{\delta} |B_{2r}|$. Then, Lemma~\ref{lem-LV-1} allows us to find  a constant $\varepsilon_1 = \varepsilon_1(\mathtt{data}) \in (0,1)$ such that 
\begin{align}\label{m-1}
\inf_{B_{2r}} u \ge \varepsilon_1 \lambda > \mu \lambda = \mu^{k(\lambda)} \lambda,
\end{align}
provided $0<\mu<\varepsilon_1$. If  $k(\lambda) > 1$, in this scenario, the index set $J_{\lambda} := \{1,2,...,k(\lambda)-1\}$ is non-empty. If there exists some $j_0 \in J_{\lambda}$ such that~\eqref{TH1} holds, then we immediately get 
\begin{align}\label{m-2}
u > \mu^{j_0} \lambda \ge \mu^{k(\lambda)} \lambda \ \mbox{ a.e. in } B_{2r}.
\end{align}
In the other case,~\eqref{TH2} holds for all $j \in J_{\lambda}$. Thus, by~\eqref{m-lam}, one has
\begin{align}\notag
|B_{2r} \cap \{u > \mu^{k(\lambda)-1} \lambda\}| & \ge (2^n\tilde{\delta})^{-1} \left|B_{2r} \cap \left\{u > \mu^{k(\lambda)-2}\lambda\right\}\right| \\
%& \ge \cdots \notag \\
& \ge (2^n\tilde{\delta})^{-(k(\lambda)-1)} \left|B_{2r} \cap \left\{u > \lambda\right\}\right| \notag \\
& \ge 2^n\tilde{\delta} |B_{2r}|.\notag
\end{align}
Applying Lemma~\ref{lem-LV-1} once again, there is $\varepsilon_3 = \varepsilon_3(\mathtt{data}) \in (0,1)$ such that 
\begin{align}\label{m-3}
\inf_{B_{2r}} u \ge \varepsilon_3 \mu^{k(\lambda)-1} \lambda > \mu^{k(\lambda)} \lambda,
\end{align}
if $0< \mu \le \varepsilon_3$. Taking~\eqref{m-1},~\eqref{m-2} and~\eqref{m-3} into account, it results that
\begin{align}\label{u-m-lambda}
\inf_{B_{2r}} u \ge \mu^{k(\lambda)} \lambda, \ \mbox{ for every } \lambda>0,
\end{align}
where $0<\mu< \min_{1\le i \le 3}\varepsilon_i < 1$. By setting  $\gamma = \log_{2^n\tilde{\delta}} \mu > 0$, inequalities~\eqref{m-lam} and~\eqref{u-m-lambda} yield
\begin{align}\notag
\inf_{B_{2r}} u \ge \mu^{k(\lambda)} \lambda = \left[(2^n\tilde{\delta})^{k(\lambda)}\right]^{\gamma} \lambda \ge \left[2^n\tilde{\delta} \frac{|B_{2r} \cap \{u > \lambda\}|}{|B_{2r}|}\right]^{\gamma} \lambda,
\end{align}
which allows us to conclude~\eqref{lam-0-est}. The proof is complete.
\end{proof}

\bigskip

\begin{theorem}\label{theo-weak-Har}
Let $u \in W_{\mathrm{loc}}^{1,\mathcal{H}}(\Omega)$ be a nonnegative local $\mathcal{Q}$-minimizer of the functional $\mathbb{F}$ defined in~\eqref{eq-main}, for some $\mathcal{Q} \ge 1$. Assume that 
\begin{align}\label{Hu-10r}
B_{18r} \equiv B_{18r}(x_0) \Subset \Omega \mbox{ and } u \in \mathbb{W}^{\mathcal{H}}(B_{18r}) \mbox{ for some } r>0.
\end{align}
There exist an exponent $\alpha_0 = \alpha_0(\mathtt{data}) > 0$ and a constant $C = C(\alpha,\mathtt{data}) \ge 1$, such that the following inequality holds:
\begin{equation}\label{ineq-weak-Harnack}
\left(\fint_{B_{2r}} u^{\alpha} dx \right)^{\frac{1}{\alpha}} \le C \inf_{B_r} u, \quad \mbox{ for every } \alpha \in (0,\alpha_0).
\end{equation}
\end{theorem}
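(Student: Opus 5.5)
The plan is to derive the weak Harnack inequality directly from the measure-to-infimum estimate of Lemma~\ref{lem-lambda-0}, using the layer-cake representation of the $L^\alpha$ average. Under~\eqref{Hu-10r}, Lemma~\ref{lem-lambda-0} gives constants $C_0=C_0(\mathtt{data})\in(0,1]$ and $\gamma=\gamma(\mathtt{data})>0$ with
\begin{align*}
\frac{|B_{2r}\cap\{u>\lambda\}|}{|B_{2r}|}\le \left(\frac{\inf_{B_{2r}}u}{C_0\lambda}\right)^{1/\gamma},\quad \mbox{for every } \lambda>0.
\end{align*}
Set $m:=\inf_{B_{2r}}u$. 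If $m=0$, the estimate forces $|B_{2r}\cap\{u>\lambda\}|=0$ for every $\lambda>0$. Hence $u\equiv 0$ a.e. in $B_{2r}$, and~\eqref{ineq-weak-Harnack} is trivial. So assume $m>0$.

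Next, write
\begin{align*}
\fint_{B_{2r}}u^\alpha dx=\alpha\int_0^\infty \lambda^{\alpha-1}\frac{|B_{2r}\cap\{u>\lambda\}|}{|B_{2r}|}d\lambda ,
\end{align*}
and split the integral at $\lambda=m/C_0$. On $(0,m/C_0)$, bound the measure ratio by $1$; this contributes $(m/C_0)^\alpha$. On $(m/C_0,\infty)$, use the decay above; this contributes
\begin{align*}
\alpha\left(\frac{m}{C_0}\right)^{1/\gamma}\int_{m/C_0}^\infty\lambda^{\alpha-1-1/\gamma}d\lambda=\frac{\alpha}{1/\gamma-\alpha}\left(\frac{m}{C_0}\right)^{\alpha}.
\end{align*}
This is finite precisely when $\alpha<1/\gamma$, so I set $\alpha_0:=1/\gamma$. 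Combining the two pieces gives
\begin{align*}
\left(\fint_{B_{2r}}u^\alpha dx\right)^{1/\alpha}\le C_0^{-1}\left(\frac{1}{1-\alpha\gamma}\right)^{1/\alpha}\inf_{B_{2r}}u .
\end{align*}
Finally, $\inf_{B_{2r}}u\le \inf_{B_r}u$ because $B_r\subset B_{2r}$. This yields~\eqref{ineq-weak-Harnack} with $C=C(\alpha,\mathtt{data})$, which blows up as $\alpha\uparrow\alpha_0$, consistent with the stated dependence.

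The main obstacle lies in Lemma~\ref{lem-lambda-0} itself rather than in the integration step. One has to check that every application of Lemma~\ref{lem-LV-1} inside the covering argument stays within balls $B_{12\rho}(x)\subset B_{18r}$, where the normalization $u\in\mathbb{W}^{\mathcal{H}}$ is inherited from $B_{18r}$. This matters because the Muckenhoupt-type tools (Lemmas~\ref{lem-Jensen} and~\ref{lem-Sob-Poin}) only hold under that normalization. I will take this as granted from the preceding lemma. The only remaining care is that its constants, and hence $\alpha_0$, depend only on $\mathtt{data}$, and that the case $m=0$ is handled separately as above.
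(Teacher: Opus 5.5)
Your proposal is correct and follows the paper's proof essentially verbatim: you invert Lemma~\ref{lem-lambda-0} into a decay bound for the distribution function of $u$ on $B_{2r}$, insert it into the layer-cake formula for $\fint_{B_{2r}}u^\alpha\,dx$, and take $\alpha_0 = 1/\gamma$. Splitting at $m/C_0$ instead of $m$, treating $m=0$ separately, and using $\inf_{B_{2r}}u\le\inf_{B_r}u$ explicitly are minor bookkeeping refinements that the paper leaves implicit.
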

\begin{proof}
Let us define  $\lambda_0:= \inf_{B_{2r}} u$. Thanks to Lemma~\ref{lem-lambda-0}, there exists  $\gamma>0$ such that
\begin{align}\label{lam-0}
\frac{|B_{2r} \cap \{u > \lambda\}|}{|B_{2r}|} \le C \left(\frac{\lambda_0}{\lambda}\right)^{\frac{1}{\gamma}},
\end{align}
for every $\lambda>0$. We now choose $0< \alpha < \frac{1}{\gamma}$ and employ standard layer-cake representation for the $L^\alpha$-norm to get
\begin{align*}
\fint_{B_{2r}} u^{\alpha} dx  & = \int_{0}^{\infty} \alpha \lambda^{\alpha-1} \frac{|B_{2r} \cap \{u > \lambda\}|}{|B_{2r}|} d\lambda,
\end{align*}
one deduces from~\eqref{lam-0} that
\begin{align*}
\fint_{B_{2r}} u^{\alpha} dx  & \le \lambda_0^{\alpha} + \int_{\lambda_0}^{\infty} \alpha\lambda^{\alpha-1} \frac{|B_{2r} \cap \{u > \lambda\}|}{|B_{2r}|} \, d\lambda  \\
& \le \lambda_0^{\alpha} + C \alpha\lambda_0^{\frac{1}{\gamma}} \int_{\lambda_0}^{\infty} \lambda^{\alpha-1-\frac{1}{\gamma}} \, d\lambda = \left(1 + \frac{C \alpha}{\frac{1}{\gamma}-\alpha}\right) \lambda_0^{\alpha}, 
\end{align*}
which establishes the desired weak Harnack inequality~\eqref{ineq-weak-Harnack}. The proof is now complete.
\end{proof}

\subsection{The supremum estimate}

This section establishes the supremum estimates for nonnegative local $\mathcal{Q}$-minimizers. To this aim, we first recall a De Giorgi-type iteration lemma, which is essential for our development. The reader is referred to~\cite[Lemma 5.1]{NT26} for its detailed proof.

\begin{lemma}[De Giorgi-type iteration]
\label{lem:DeGiorgi}
Let $c, \delta, \gamma$, and $\Pi > 1$ be given positive constants. Let $\Psi: [0, \infty) \to [0, \infty)$ be a strictly increasing function satisfying
\begin{align}\notag %\label{assump-beta}
\Psi(\epsilon s) \le \epsilon^{\gamma} \Psi(s) \quad \text{for all } \epsilon \in (0,1] \text{ and } s \ge 0.
\end{align}
Suppose that the non-negative sequence $\{Z_k\}_{k=0}^{\infty}$ satisfies the following conditions
\begin{align}\notag %\label{assump-Ym}
\begin{cases}\Psi(Z_0) &\le c^{-\frac{1}{\delta}} \Pi^{-\frac{1}{\delta}\left(1 + \frac{1}{\delta\gamma}\right)},\\
Z_{k+1} &\le c  \Pi^{k+1} \left[\Psi\big(Z_{k}\big)\right]^{\delta} Z_{k}, \quad k=0,1,2,\dots
\end{cases}
\end{align}
Then, the sequence $\{Z_k\}_{k=0}^{\infty}$ converges to $0$ as $k \to \infty$. 
%Moreover, the following decay estimate holds:
%\begin{align*}\notag %\label{Ym-ansatz}
%Z_{k+1} \le \Pi^{-\frac{k+1}{\delta\gamma}} Z_0, \quad \text{for every } k \ge 0.
%\end{align*}
\end{lemma}

\begin{lemma}\label{lem-sup-Har}
Let $u \in W_{\mathrm{loc}}^{1,\mathcal{H}}(\Omega)\cap L_{\mathrm{loc}}^{\infty}(\Omega)$ be a nonnegative local $\mathcal{Q}$-minimizer of the functional $\mathbb{F}$ defined in~\eqref{eq-main}, for some $\mathcal{Q} \ge 1$. Assume that 
\begin{align}\label{Hu-2r}
B_{2r} \equiv B_{2r}(x_0) \Subset \Omega \mbox{ and } u \in \mathbb{W}^{\mathcal{H}}(B_{2r})
\end{align}
for some $r\in (0,1]$. Then, there exist $\widetilde{\theta} = \widetilde{\theta}(\mathtt{data})>1$ and $C = C(\mathtt{data}) \ge 1$ such that the following inequality
\begin{align}\label{H-x-u-1}
\sup_{B_{\sigma r}} u & \le C (\varpi-\sigma)^{-\widetilde{\theta}} \mathcal{H}_{B_{\varpi r}}^{-1}\left(\fint_{B_{\varpi r}} \mathcal{H}\left(x,u\right) dx\right) 
\end{align}
holds for every pair of radii $\sigma, \varpi$ satisfying $1 \le \sigma < \varpi \le 2$.
\end{lemma}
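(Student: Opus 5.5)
A De Giorgi iteration on super-level sets, closed by Lemma~\ref{lem:DeGiorgi}, should give \eqref{H-x-u-1}. All the averaged quantities will be measured through the one-variable Young function $\mathcal{H}_{B_{\varpi r}}$ from \eqref{def-PHI}, rather than through the frozen coefficient $a(x_0)$.

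\textbf{Setup.} Fix $1\le\sigma<\varpi\le 2$ and a level $\kappa>0$, to be chosen at the end. For $k\ge0$ put
\begin{align*}
\rho_k=\sigma r+2^{-k}(\varpi-\sigma)r,\qquad \kappa_k=\kappa(1-2^{-k-1}),\qquad B_k=B_{\rho_k},
\end{align*}
and $A_k=B_k\cap\{u>\kappa_{k+1}\}$. The quantity to iterate is
\begin{align*}
Z_k=\frac{1}{\mathcal{H}_{B_{\varpi r}}(\kappa)}\fint_{B_{\varpi r}}\chi_{B_k}\,\mathcal{H}\big(x,(u-\kappa_k)_+\big)\,dx .
\end{align*}
Since $u\ge0$ and $\kappa_0=\kappa/2$, we have $Z_0\le \mathcal{H}_{B_{\varpi r}}(\kappa)^{-1}\fint_{B_{\varpi r}}\mathcal{H}(x,u)\,dx$.

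\textbf{Caccioppoli step.} Apply \eqref{Cacci-ineq} between $\tilde\rho_k=(\rho_k+\rho_{k+1})/2$ and $\rho_k$. Then use Lemma~\ref{lem-H} to pull the factor $2^{k}/((\varpi-\sigma)r)$ out, which costs $c\,2^{kq}(\varpi-\sigma)^{-q}r^{-q}$. This gives
\begin{align*}
\int_{B_{\tilde\rho_k}}\mathcal{H}\big(x,|\nabla(u-\kappa_{k+1})_+|\big)\,dx\le \frac{c\,2^{kq}}{(\varpi-\sigma)^{q}r^{q}}\int_{B_k}\mathcal{H}\big(x,(u-\kappa_{k+1})_+\big)\,dx .
\end{align*}

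\textbf{Sobolev step.} Multiply $(u-\kappa_{k+1})_+$ by a cut-off between $B_{k+1}$ and $B_{\tilde\rho_k}$, so that the product has zero boundary values. The hypothesis $u\in\mathbb{W}^{\mathcal{H}}(B_{2r})$ keeps its modular at most $1$, up to a constant that can be normalised. We may therefore apply the higher-integrability Sobolev--Poincar\'e inequality of Lemma~\ref{lem-Sob-Poin} with exponent $\theta>1$. Combined with H\"older's inequality, this yields
\begin{align*}
\fint\chi_{B_{k+1}}\mathcal{H}\big(x,(u-\kappa_{k+1})_+\big)\le c\,r^{q}\Big(\frac{|A_k|}{|B_{\varpi r}|}\Big)^{1-1/\theta}\fint\mathcal{H}(x,|\nabla\cdot|).
\end{align*}
The $r^q$ comes from $r\le1$ and Lemma~\ref{lem-H}, and it cancels the $r^{-q}$ from the Caccioppoli step.

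\textbf{Chebyshev step.} On $A_k$ we have $(u-\kappa_k)_+\ge 2^{-k-2}\kappa$. Hence
\begin{align*}
|A_k|\,\mathcal{H}_{A_k}(2^{-k-2}\kappa)\le \int_{B_k}\mathcal{H}\big(x,(u-\kappa_k)_+\big).
\end{align*}
This is the main obstacle: the average of $a$ over $A_k$ must be compared with its average over $B_{\varpi r}$. This is where Lemma~\ref{lem:Re-Holder} enters, the self-improving property of averages of $\mathcal{H}$. For $\kappa$ below the threshold $\|\chi_{B_{\varpi r}}\|^{-1}_{L^{\mathcal H}}$, which Lemma~\ref{lem-norm-chi} bounds from below, the reverse H\"older inequality gives an $A_\infty$-type comparison
\begin{align*}
\int_{A_k}\mathcal{H}(x,t)\,dx\ge c\Big(\frac{|A_k|}{|B_{\varpi r}|}\Big)^{\theta'}\int_{B_{\varpi r}}\mathcal{H}(x,t)\,dx .
\end{align*}
Used in the Chebyshev step, this bounds $|A_k|/|B_{\varpi r}|\le \Psi(Z_k)$ with $\Psi(s)=(c\,2^{kq}s)^{1/(1+\theta')}$, which is strictly increasing and homogeneous. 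Feeding this into the Sobolev step gives the recursion
\begin{align*}
Z_{k+1}\le c\,(\varpi-\sigma)^{-q}\,\Pi^{k+1}\,\Psi(Z_k)^{\delta}Z_k,\qquad \delta=1-1/\theta,
\end{align*}
for suitable $\Pi>1$. When $\kappa$ exceeds the threshold, $\kappa$ is already comparable to the threshold, and since $u\in L^\infty$ that case is handled separately, with the constant absorbing the dependence. The bookkeeping of these two regimes is the delicate part.

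\textbf{Conclusion.} Lemma~\ref{lem:DeGiorgi} gives $Z_k\to0$, hence $\sup_{B_{\sigma r}}u\le\kappa$, provided
\begin{align*}
\mathcal{H}_{B_{\varpi r}}(\kappa)\ge c\,(\varpi-\sigma)^{-q/\delta'}\fint_{B_{\varpi r}}\mathcal{H}(x,u)\,dx .
\end{align*}
Inverting via \eqref{HB-1} lets us choose
\begin{align*}
\kappa=C(\varpi-\sigma)^{-\widetilde\theta}\,\mathcal{H}_{B_{\varpi r}}^{-1}\Big(\fint_{B_{\varpi r}}\mathcal{H}(x,u)\,dx\Big),
\end{align*}
with $\widetilde\theta=q/(p\,\delta')$, which is \eqref{H-x-u-1}.
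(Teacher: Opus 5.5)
There is a genuine gap. Your architecture matches the paper's: Caccioppoli, the higher-integrability Sobolev--Poincar\'e inequality for a cut-off truncation, H\"older to extract $(|A_k|/|B_{\varpi r}|)^{1-1/\theta}$, a level-set measure bound, then Lemma~\ref{lem:DeGiorgi}. The problem is the measure bound, which you rightly single out as the crux: it rests on the wrong tool. Combined with H\"older's inequality, the reverse H\"older inequality \eqref{phi-eps} only yields $\int_E\mathcal{H}(x,t)\,dx\le C(|E|/|B|)^{1-1/\theta}\int_B\mathcal{H}(x,t)\,dx$ for $E\subset B$. That is an \emph{upper} bound on the mass of small sets. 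You need the lower bound $\int_{A_k}\mathcal{H}(x,t)\,dx\ge c(|A_k|/|B|)^{\theta'}\int_B\mathcal{H}(x,t)\,dx$, and it does not follow on a single ball: nothing there prevents $A_k$ from lying where $a$ is small while $(a)_B t^q\gg t^p$. Extracting it from reverse H\"older on all sub-balls would need a Calder\'on--Zygmund/$A_\infty$ argument you do not supply.

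The paper obtains this bound from Jensen's inequality \eqref{Jen-ineq}. For $f=t\chi_A$ with $A\subset B$, it reads $\mathcal{H}_B(t|A|/|B|)\le C|B|^{-1}\int_A\mathcal{H}(x,t)\,dx$, so $(|A|/|B|)^q\int_B\mathcal{H}(x,t)\,dx\le C\int_A\mathcal{H}(x,t)\,dx$ by Lemma~\ref{lem-H}. Concretely, Lemma~\ref{lem-Jensen} with $f=2^{-(k+1)}\Lambda\chi_{S^{k+1}}$ gives $|S^{k+1}|/|B^{k+1}|\le C2^{k+1}\Lambda^{-1}\mathcal{H}^{-1}_{B_{\varpi r}}(U_k)$. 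The recursion is then closed with $\Psi=\mathcal{H}^{-1}_{B_{\varpi r}}$, admissible with $\gamma=1/q$ by \eqref{HB-1}. Because $\lambda_0=0$, $U_0=\fint_{B_{\varpi r}}\mathcal{H}(x,u)\,dx$ does not involve $\Lambda$, so the admissible level is linear in $\mathcal{H}^{-1}_{B_{\varpi r}}(U_0)$. Your normalization of $Z_k$ by $\mathcal{H}_{B_{\varpi r}}(\kappa)$, with $\chi_{B_k}$ inside an average over the fixed ball $B_{\varpi r}$, would combine well with this, since Jensen can then be applied directly on $B_{\varpi r}$.

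Second, the regime $\kappa>\|\chi_{B_{\varpi r}}\|_{L^{\mathcal{H}}}^{-1}$ is not handled. Nothing makes $\kappa$ comparable to the threshold. Absorbing $\|u\|_{L^\infty}$ (and, through Lemma~\ref{lem-norm-chi}, $(a)_{B_{2r}}$) into the constant proves a weaker statement than the lemma, which asserts $C=C(\mathtt{data})$. The paper's proof of this lemma has no case split; the split according to $\|\chi_{B_{\varpi r}}\|_{L^{\mathcal{H}}}\sup u$, and the dependence on $(a)_{B_{2r}}$ and $\|u\|_{L^\infty(B_{2r})}$, appear only in Theorem~\ref{theo-sup-Har}.

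Also, once Jensen is used, the size of $\kappa$ relative to $\|\chi_{B_{\varpi r}}\|^{-1}_{L^{\mathcal{H}}}$ is not the relevant quantity. Lemma~\ref{lem-Jensen} needs $\|t\chi_{A_k}\|_{L^{\mathcal{H}}}\le1$, i.e.\ $\int_{A_k}\mathcal{H}(x,t)\,dx\le1$. Your own Chebyshev step bounds this by the modular $\int_{B_k}\mathcal{H}(x,(u-\kappa_k)_+)\,dx$ of the truncation, so it holds, e.g., if $\int_{B_{\varpi r}}\mathcal{H}(x,u)\,dx\le 1$. The paper applies Lemma~\ref{lem-Jensen} without recording this check.

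Finally, for $\rho\le1$, Lemma~\ref{lem-H} gives $\mathcal{H}(x,\rho s)\le\rho^{p}\mathcal{H}(x,s)$, not $\rho^{q}$. So $r\le1$ alone does not produce the $r^{q}$ you use to cancel the $r^{-q}$ from Caccioppoli. The paper arranges this scaling differently, through the factor $\widetilde r_k/(\beta\pi_{k+1})$ of its test function $w_{k+1}=\beta\pi_{k+1}\eta_kv_{k+1}$.
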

\begin{proof} 
For any fix $1 \le \sigma < \varpi \le 2$, we first construct a sequence of concentric balls $\{B^k\}_{k=0}^{\infty}$ defined by
\begin{align}\notag
B^k = B_{r_k}, \ \mbox{ with } \ r_k = \left[\sigma + 2^{-k}(\varpi -\sigma)\right]r, \quad \text{ for } k \ge 0. 
\end{align}
Setting $\pi_k = 2^{-k}(\varpi -\sigma) \in (0, 2^{-k})$, it is easily to verify that
\begin{align}\notag
r \le \lim_{j \to \infty} r_j = \sigma r < r_{k+1}  < r_{k} \le r_0 = \varpi r \le 2r, \mbox{ and } \ \frac{r_{k}-r_{k+1}}{r} = \pi_{k+1}.
\end{align}
Next, let $\Lambda$ be a positive constant to be clarified later. For each $k \ge 0$, we introduce the truncated functions and their corresponding support sets as follows:
\begin{align}\notag
v_k = (u  - \lambda_k)_{+}, \ \mbox{ and } \ S^k = B^k \cap \left\{v_k>0\right\},
\end{align}
where the climbing threshold levels are given by $\lambda_k := \Lambda\left(1-2^{-k}\right)$. Our primary objective is to establish a De Giorgi-type iteration for the sequence of energy integrals $\{U_k\}_{k=0}^{\infty}$ defined by
\begin{align}\label{def-Uk}
U_k := \fint_{B^k} \mathcal{H}\left(x,v_k\right)dx, \quad k \ge 0.
\end{align}
Applying H\"older's inequality with the exponent $\theta>1$ specified in Lemma~\ref{lem-Sob-Poin}, we obtain
\begin{align}\label{sup-Har-est-1}
U_{k+1} & = \fint_{B^{k+1}} \mathcal{H}\left(x,v_{k+1}\right) \chi_{S^{k+1}}(x) dx \le \left[\fint_{B^{k+1}} \mathcal{H}^\theta \left(x,v_{k+1}\right) dx \right]^{\frac{1}{\theta}} \left[\frac{|S^{k+1}|}{|B^{k+1}|}\right]^{1 - \frac{1}{\theta}}.
\end{align}
To bound the first factor on the right-hand side of~\eqref{sup-Har-est-1}, we introduce a sequence of smooth cut-off functions $\eta_k \in C_c^{\infty}(\widetilde{B}^{k})$ satisfying
\begin{align}\label{def-eta-k}
0\le \eta_k \le 1, \ \eta_k \equiv 1 \mbox{ in } B^{k+1}, \mbox{ and } |\nabla \eta_k| \le \frac{4}{r_{k}-r_{k+1}},
\end{align}
where the concentric ball $\widetilde{B}^{k} := B_{\widetilde{r}_{k}}$, whose radius $\widetilde{r}_{k}$ is the arithmetic mean of $r_{k+1}$ and $r_{k}$, namely, 
$\widetilde{r}_{k} = \frac{1}{2}(r_{k}+r_{k+1})$.
Throughout the subsequent estimates, we record the following scaling properties for our radii, which can be easily verified from the definitions:
\begin{align}\label{r10}
& \frac{r_{k} -r_{k+1}}{r} = \frac{2\big(r_{k} - \widetilde{r}_{k}\big)}{r} = \pi_{k+1}, \ \mbox{ and } \ \frac{\widetilde{r}_{k}}{r_{k} - \widetilde{r}_{k}} \sim \frac{\widetilde{r}_{k}}{r_{k} - {r}_{k+1}} \sim \frac{1}{\pi_{k+1}}.
\end{align}
Next, let us introduce the scaled test function $w_{k+1}$ defined by
$w_{k+1} := \beta \pi_{k+1} \eta_k v_{k+1}$,
where $\beta > 0$. It ensures that $w_{k+1} \in W_0^{1,\mathcal{H}}(\widetilde{B}^k) \cap \mathbb{W}^{\mathcal{H}}(\widetilde{B}^k)$.  Indeed, by the properties of $\eta_k$ in~\eqref{def-eta-k} and relations in~\eqref{r10}, one has $w_{k+1} \in W_0^{1,\mathcal{H}}(\widetilde{B}^{k})$. Furthermore, the gradient of $w_{k+1}$ satisfies:
\begin{align}\label{est-eta}
|\nabla w_{k+1}| & \le \beta \pi_{k+1} \left( |\nabla v_{k+1}|\eta_k + |\nabla \eta_k| v_{k+1}\right) 
%& \le \beta \pi_{k+1} \left(|\nabla v_{k+1}| + \frac{4}{r \pi_{k+1}}v_{k+1}\right) \notag \\ 
\le \beta \pi_{k+1} |\nabla v_{k+1}| + \frac{4\beta}{r} v_{k+1}.
\end{align}
Since $\beta > 0$ is chosen sufficiently small, we can exploit the the $\Delta_2$-condition of $\mathcal{H}$ as formulated in~\eqref{ineq-H-1} and the estimate~\eqref{est-eta} to obtain:
\begin{align*}
\int_{\widetilde{B}^{k}} \mathcal{H}\big(x, |\nabla w_{k+1}|\big)  dx & \le C  \int_{\widetilde{B}^{k}} \mathcal{H}(x, \beta \pi_{k+1} |\nabla v_{k+1}|) dx   + C \int_{\widetilde{B}^{k}} \mathcal{H}\left(x, \frac{4\beta \widetilde{r}_{k}}{r} \frac{v_{k+1}}{\widetilde{r}_{k}}\right)  dx\\
& \le C  (\beta \pi_{k+1} )^p \int_{\widetilde{B}^{k}} \mathcal{H}(x, |\nabla v_{k+1}|) dx  + C \left(\frac{4\beta \widetilde{r}_{k}}{r} \right)^p \int_{\widetilde{B}^{k}} \mathcal{H}\left(x, \frac{v_{k+1}}{\widetilde{r}_{k}}\right)  dx \\
& \le C \beta^p \left[\int_{\widetilde{B}^{k}} \mathcal{H}(x, |\nabla v_{k+1}|) dx + \int_{\widetilde{B}^{k}} \mathcal{H}\left(x, \frac{v_{k+1}}{\widetilde{r}_{k}}\right)  dx\right].
\end{align*}
By invoking the Sobolev-Poincar\'e's inequality~\eqref{SP-ineq} on the last term, we arrive at
\begin{align*}
\int_{\widetilde{B}^{k}} \mathcal{H}\big(x, |\nabla w_{k+1}|\big)  dx & \le C \beta^p \int_{\widetilde{B}^{k}} \mathcal{H}(x, |\nabla v_{k+1}|) dx  \le  C \beta^p \int_{B_{2r}} \mathcal{H}\left(x, |\nabla u|\right)dx  \le 1.
\end{align*}
Next, choosing $\beta>0$ such that $C \beta^p \le 1$ and using the assumption~\eqref{Hu-2r}, we conclude $w_{k+1} \in \mathbb{W}^{\mathcal{H}}(\widetilde{B}^{k})$. This allows us to apply the Sobolev–Poincar\'e inequality~\eqref{SP-ineq} to the function $w_{k+1}$ on the concentric ball $\widetilde{B}^k$. Noting that $1 < \frac{\widetilde{r}_{k}}{r_{k+1}} < \frac{3}{2}$, we apply the scaling property of $\mathcal{H}$ to derive
\begin{align}\label{app:SP-1}
\left[\fint_{B^{k+1}} \mathcal{H}^\theta \left(x,v_{k+1}\right) dx \right]^{\frac{1}{\theta}} & \le \left[\frac{|\widetilde{B}^{k}|}{|B^{k+1}|} \fint_{\widetilde{B}^{k}} \mathcal{H}^\theta \left(x,\eta_k v_{k+1}\right) dx \right]^{\frac{1}{\theta}} \notag\\
& \le \left[\frac{|\widetilde{B}^{k}|}{|B^{k+1}|} \left(\frac{\widetilde{r}_{k}}{\beta \pi_{k+1}}\right)^{q\theta} \fint_{\widetilde{B}^{k}} \mathcal{H}^\theta\left(x, \frac{w_{k+1}}{\widetilde{r}_{k}}\right) dx \right]^{\frac{1}{\theta}} \notag \\
& \le C \left(\frac{\widetilde{r}_{k}}{\beta \pi_{k+1}}\right)^{q} \fint_{\widetilde{B}^{k}} \mathcal{H}\big(x, |\nabla w_{k+1}|\big) dx.
\end{align}
To estimate the integral on the right-hand side, we invoke the Caccioppoli inequality~\eqref{Cacci-ineq}, the gradient bound~\eqref{est-eta}, and the relations in~\eqref{r10}. Since $r_k - \widetilde{r}_k \sim r \pi_{k+1}$, we obtain
\begin{align}\label{app:SP-2}
\fint_{\widetilde{B}^{k}} \mathcal{H}\big(x, |\nabla w_{k+1}|\big) dx  & \le C \fint_{\widetilde{B}^{k}} \mathcal{H}(x, \beta \pi_{k+1}|\nabla v_{k+1}|) dx  + C\fint_{\widetilde{B}^{k}} \mathcal{H}\left(x, \frac{4\beta}{r}v_{k+1}\right) dx \notag \\
& \le C \frac{|B_{k}|}{|\widetilde{B}^{k}|} \fint_{B^{k}} \mathcal{H}\left(x, \frac{\beta \pi_{k+1} v_{k+1}}{r_{k}-\widetilde{r}_{k}}\right) dx + C \frac{|B^{k}|}{|\widetilde{B}^{k}|} \fint_{B^{k}} \mathcal{H}\left(x, \frac{4\beta}{r}v_{k+1}\right) dx \notag \\
& \le  \frac{C}{r^q} \fint_{B^{k}} \mathcal{H}\left(x, v_{k}\right) dx.
\end{align}
Substituting~\eqref{app:SP-2} into~\eqref{app:SP-1} and invoking the relations from~\eqref{r10}, one obtains the following estimate 
\begin{align}\label{app:SP-3}
\left[\fint_{B^{k+1}} \mathcal{H}^\theta\left(x, v_{k+1}\right) dx \right]^{\frac{1}{\theta}} & \le    C \left(\frac{\widetilde{r}_{k}}{r\beta \pi_{k+1}}\right)^{q} \fint_{B^{k}} \mathcal{H}\left(x, v_{k}\right) dx \le \frac{2^{(k+1)q}C}{(\varpi -\sigma)^q} U_{k}.
\end{align}
In this stage, we turn our attention to bounding the measure of the level set in~\eqref{sup-Har-est-1}. For almost every $x \in S^{k+1}$, it is clear that $v_{k}(x) > \lambda_{k+1} - \lambda_{k} =  2^{-(k+1)} \Lambda \chi_{S^{k+1}}(x)$, which implies
\begin{align}\notag
U_{k} \ge \frac{1}{|B^{k}|} \int_{S^{k+1}} \mathcal{H}\left(x,v_{k}\right)dx  \ge \frac{|B^{k+1}|}{|B^{k}|} \fint_{B^{k+1}} \mathcal{H}\left(x,2^{-(k+1)} \Lambda \chi_{S^{k+1}}(x)\right)dx.\notag
\end{align}
Thanks to the Jensen's inequality~\eqref{Jen-ineq} in Lemma~\ref{lem-Jensen}, it yields
\begin{align}\notag
U_{k} \ge C \fint_{B^{k+1}} \mathcal{H}\left(x, 2^{-(k+1)} \Lambda \frac{|S^{k+1}|}{|B^{k+1}|}\right)dx \ge C \mathcal{H}_{B_{\varpi r}}\left(2^{-(k+1)} \Lambda \frac{|S^{k+1}|}{|B^{k+1}|}\right),
\end{align}
where $\mathcal{H}_{B_{\varpi r}}$ is defined by~\eqref{def-PHI}. This inequality implies that
\begin{align}
\frac{|S^{k+1}|}{|B^{k+1}|} \le C 2^{k+1} \Lambda^{-1} \mathcal{H}_{B_{\varpi r}}^{-1}\left(U_{k}\right).\label{app:SP-4}
\end{align}
Substituting~\eqref{app:SP-3} and~\eqref{app:SP-4} into~\eqref{sup-Har-est-1}, one observes
\begin{align}
U_{k+1} & \le \frac{2^{(k+1)q}C}{(\varpi -\sigma)^q}  \left[2^{k+1} \Lambda^{-1} \mathcal{H}_{B_{2r}}^{-1}\left(U_{k}\right) \right]^{1-\frac{1}{\theta}} U_{k} \le \frac{C_0 \Lambda^{-\frac{\theta-1}{\theta}}}{(\varpi-\sigma)^q} \left[ 2^{q+1-\frac{1}{\theta}}\right]^{k+1} \left[\mathcal{H}_{B_{\varpi r}}^{-1}\left(U_{k}\right) \right]^{\frac{\theta- 1}{\theta}} U_{k}. \notag
\end{align}
Applying Lemma~\ref{lem:DeGiorgi} with the following parameters 
\begin{align*}
Z_k = U_k, \ \delta = \frac{\theta-1}{\theta}, \ \gamma = \frac{1}{q}, \ c = \frac{C_0 \Lambda^{-\frac{\theta-1}{\theta}}}{(\varpi -\sigma)^q}, \ \Pi = 2^{q+1-\frac{1}{\theta}}, \ \Psi = \mathcal{H}_{B_{\varpi r}}^{-1},
\end{align*}
it results that $\lim_{k \to \infty} U_k = 0$ provided
\begin{align}
\mathcal{H}_{B_{\varpi r}}^{-1}(U_0) = \left[\frac{C_0 \Lambda^{-\frac{\theta-1}{\theta}}}{(\varpi -\sigma)^q}\right]^{-\frac{\theta}{\theta-1}} \left[2^{q+1-\frac{1}{\theta}}\right]^{-\frac{\theta}{\theta-1}\left(1+\frac{\theta q}{\theta-1}\right)}. \notag
\end{align}
More precisely, we choose $\Lambda$ such that
\begin{align}
\Lambda = C_0^{\frac{\theta}{\theta-1}} 2^{\left(\frac{\theta q}{\theta-1}+1\right)^2} (\varpi-\sigma)^{-\frac{\theta q}{\theta-1}} \mathcal{H}_{B_{\varpi r}}^{-1}(U_0).\notag
\end{align}
It implies $\sup_{B_{\sigma r}} u \le \Lambda$, which deduces 
\begin{align}
\sup_{B_{\sigma r}} u & \le (\varpi-\sigma)^{-\widetilde{\theta}} \mathcal{H}_{B_{\varpi r}}^{-1}(U_0), \ \mbox{ with } \widetilde{\theta} := \frac{\theta q}{\theta-1}. \notag
\end{align}
We arrive at the estimate~\eqref{H-x-u-1} by the definition of $U_0$ in~\eqref{def-Uk}, therefore concluding the proof of Lemma~\ref{lem-sup-Har}. 
\end{proof}

\begin{theorem}\label{theo-sup-Har}
Let $u \in W_{\mathrm{loc}}^{1,\mathcal{H}}(\Omega)\cap L_{\mathrm{loc}}^{\infty}(\Omega)$ be a non-negative local $\mathcal{Q}$-minimizer of the functional $\mathbb{F}$ defined in~\eqref{eq-main}, with $\mathcal{Q} \ge 1$. Assume that 
\begin{align}\label{Hu-2r}
B_{2r} \equiv B_{2r}(x_0) \Subset \Omega \mbox{ and } u \in \mathbb{W}^{\mathcal{H}}(B_{2r})
\end{align}
holds for some $r\in (0,1]$. Then, for every $\alpha \in (0,p)$, there exists a constant $C\ge 1$, depending on $\mathtt{data}$, $\alpha$, $(a)_{B_{2r}}$ and $\|u\|_{L^{\infty}(B_{2r})}$, such that the following inequality holds
\begin{align}
\label{ineq-sup-Harnack}
\sup_{B_r} u \le C \left( \fint_{B_{2r}} u^{\alpha} dx \right)^{\frac{1}{\alpha}}.
\end{align}
\end{theorem}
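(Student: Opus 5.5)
We will deduce \eqref{ineq-sup-Harnack} from Lemma~\ref{lem-sup-Har} by the standard interpolation argument, using Lemma~\ref{lem-Giuti} to absorb the supremum term. Set $M:=\|u\|_{L^\infty(B_{2r})}$ and $G(t):=\sup_{B_{tr}} u$ for $t\in[1,2]$; $G$ is non-decreasing and $G(2)\le M<\infty$. The first step is to turn the right-hand side of \eqref{H-x-u-1} into a quantity comparable to a power mean of $u$. Write $\mathcal{H}_B=\mathcal{H}_{B_{\varpi r}}$. Since $0\le u\le G(\varpi)$ on $B_{\varpi r}$, we have pointwise
\begin{align*}
\mathcal{H}(x,u)=u^p+a(x)u^q\le \big(u^{p}+a(x)G(\varpi)^{q-p}u^{p}\big)\le \big(1+a(x)M^{q-p}\big)u^{p-\alpha}u^{\alpha}\le \big(1+a(x)M^{q-p}\big)G(\varpi)^{p-\alpha}u^\alpha .
\end{align*}
The factor $a(x)$ cannot be pulled out pointwise against $u^\alpha$, so instead we will split $\mathcal{H}(x,u)\le G(\varpi)^{p-\alpha}u^\alpha+a(x)G(\varpi)^{q-\alpha}u^{\alpha}$ and handle the $a$-term via H\"older with the reverse H\"older exponent of Lemma~\ref{lem:Re-Holder}, or more simply by the crude bound $a(x)u^q\le a(x)M^{q}$ replaced by interpolation: our first choice is to use that $u\in\mathbb W^{\mathcal H}(B_{2r})$ together with Lemma~\ref{lem:Re-Holder} applied at $t$ comparable to $1$ (admissible after Lemma~\ref{lem-norm-chi}, with constants depending on $(a)_{B_{2r}}$ and $r\le1$) to get $\big(\fint_{B_{\varpi r}}a^{\theta}\big)^{1/\theta}\le C(1+(a)_{B_{2r}})$, and then H\"older gives
\begin{align*}
\fint_{B_{\varpi r}} a\, u^{q}\,dx\le C\,G(\varpi)^{q-\alpha/\theta'}\Big(\fint_{B_{2r}}u^{\alpha}\Big)^{1/\theta'},\qquad \theta'=\tfrac{\theta}{\theta-1}.
\end{align*}

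The second step is to invert $\mathcal{H}_B$. Since $\mathcal{H}_B(t)\ge t^p$ we have $\mathcal{H}_B^{-1}(s)\le s^{1/p}$, so \eqref{H-x-u-1} yields, with $I:=\fint_{B_{2r}}u^\alpha dx$ (using $|B_{2r}|/|B_{\varpi r}|\le 2^n$),
\begin{align*}
G(\sigma)\le C(\varpi-\sigma)^{-\widetilde\theta}\Big[G(\varpi)^{\frac{p-\alpha}{p}}I^{\frac1p}+M^{\frac{q}{p}-\frac{\alpha}{p\theta'}}I^{\frac{1}{p\theta'}}\Big].
\end{align*}
Young's inequality with exponents $\frac{p}{p-\alpha}$ and $\frac{p}{\alpha}$ (here $\alpha<p$ is used) gives $C(\varpi-\sigma)^{-\widetilde\theta}G(\varpi)^{\frac{p-\alpha}{p}}I^{\frac1p}\le \frac12 G(\varpi)+C(\varpi-\sigma)^{-\widetilde\theta p/\alpha}I^{1/\alpha}$. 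Then Lemma~\ref{lem-Giuti} with $\sigma=\frac12$, $K(t)=C t^{-\widetilde\theta p/\alpha}\big(I^{1/\alpha}+M^{\ast}I^{1/(p\theta')}\big)$ (clearly satisfying \eqref{Giuti-cond-2}) on $[1,2]$ yields $\sup_{B_r}u\le C\big(I^{1/\alpha}+C(M)I^{1/(p\theta')}\big)$.

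The main obstacle is the second, $a$-weighted term: it produces a power $I^{1/(p\theta')}$ rather than $I^{1/\alpha}$, and is not homogeneous. To close, I would exploit $I^{1/\alpha}\le M$: write $I^{1/(p\theta')}=I^{1/\alpha}\cdot I^{1/(p\theta')-1/\alpha}$; if $\alpha\le p\theta'$ this exponent is $\le0$ and fails, so instead I would first prove the estimate for small $\alpha$ and then use that for $\alpha$ small the deficiency is controlled by the weak Harnack/ infimum information, or alternatively absorb it through the dependence of $C$ on $M$ and $(a)_{B_{2r}}$ by treating separately the cases $I^{1/\alpha}\ge$ and $<$ a threshold, the latter giving a bound $\sup_{B_r}u\le C(M,(a)_{B_{2r}})\,I^{1/\alpha}$ by a further application of the above with the $a$-term estimated pointwise by $a\,u^q\le a M^{q-\alpha}u^\alpha$ and the reverse H\"older bound on $a$. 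This bookkeeping is where the constant's dependence on $(a)_{B_{2r}}$ and $\|u\|_{L^\infty(B_{2r})}$ enters, and it is the step I expect to require the most care.
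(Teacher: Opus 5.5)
Your proposal has a genuine gap. It is in your second step, where you replace the whole factor $G(\varpi)^{(q-\alpha/\theta')/p}$ by $M^{q/p-\alpha/(p\theta')}$. The inequality you then feed into Lemma~\ref{lem-Giuti} contains the term $C(M)\,I^{1/(p\theta')}$. Since $p\theta'>p>\alpha$, the ratio $I^{1/(p\theta')}/I^{1/\alpha}$ blows up as $I\to0$ with $M$ fixed. So no constant depending on $\mathtt{data}$, $\alpha$, $(a)_{B_{2r}}$ and $M$ can turn the outcome $\sup_{B_r}u\le C\big(I^{1/\alpha}+C(M)I^{1/(p\theta')}\big)$ into \eqref{ineq-sup-Harnack}.

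You notice this, but none of your remedies repairs it:
\begin{itemize}
\item The weak Harnack inequality is a lower bound for $\inf_{B_r}u$, and it needs the stronger hypothesis $u\in\mathbb{W}^{\mathcal H}(B_{18r})$. It cannot trade one power of $I$ for another in an upper bound.
\item A threshold on $I^{1/\alpha}$ leaves the small-$I$ regime untouched, and that is exactly where the bound fails.
\item The pointwise estimate $au^q\le aM^{q-\alpha}u^\alpha$ followed by H\"older yields $G(\varpi)^{\alpha/(p\theta)}I^{1/(p\theta')}$. This has total degree $\alpha/p<1$ in $(G(\varpi),I^{1/\alpha})$, so after Young you are left with $I^{(\theta-1)/(p\theta-\alpha)}$, whose exponent is again below $1/\alpha$.
\end{itemize}
The missing point is that every term on the right must be homogeneous of degree one in $(G(\varpi),I^{1/\alpha})$, with a power of $G(\varpi)$ strictly below one. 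Only the excess over $p$-growth may be charged to $M$.

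With this in mind your route does close. Write $G(\varpi)^{q-\alpha/\theta'}\le M^{q-p}G(\varpi)^{p-\alpha/\theta'}$. After $\mathcal{H}_{B_{\varpi r}}^{-1}(s)\le s^{1/p}$, the $a$-term becomes $CM^{(q-p)/p}(\varpi-\sigma)^{-\widetilde\theta}G(\varpi)^{1-\kappa}(I^{1/\alpha})^{\kappa}$ with $\kappa=\alpha/(p\theta')\in(0,1)$. Young's inequality absorbs it exactly as it does the $p$-term, giving $G(\sigma)\le\frac{1}{2}G(\varpi)+C(\varpi-\sigma)^{-\widetilde\theta p\theta'/\alpha}I^{1/\alpha}$ for $1\le\sigma<\varpi\le2$, and Lemma~\ref{lem-Giuti} finishes.

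Your higher-integrability bound for $a$ is sound. It follows from Lemma~\ref{lem:Re-Holder} at the fixed admissible level $t=1/K$, where $K=K(\mathtt{data},(a)_{B_{2r}})$ bounds $\|\chi_{B_{\varpi r}}\|_{L^{\mathcal H}(\mathbb{R}^n)}$ by Lemma~\ref{lem-norm-chi} and $r\le1$. So the repaired argument is a legitimate alternative to the paper's.

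The paper instead compares multiplicatively: $\mathcal{H}(x,u)\le(u/S)^\alpha\mathcal{H}(x,S)$ with $S=\sup_{B_{\varpi r}}u$. It applies the reverse H\"older inequality to $\mathcal{H}(\cdot,S)$ itself. This forces the case distinction on whether $S\|\chi_{B_{\varpi r}}\|_{L^{\mathcal H}(\mathbb{R}^n)}\le1$, with normalization by $\mu$ otherwise. It then inverts via the scaling \eqref{HB-1} to reach $S^{1-\kappa}(I^{1/\alpha})^{\kappa}$ with $\kappa=\alpha(\theta_1-1)/(\theta_1q)$. The repaired version of your argument needs neither the case split nor \eqref{HB-1}, at the cost of the cruder inversion $s^{1/p}$. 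The dependence on $\|u\|_{L^\infty(B_{2r})}$ then enters through $M^{(q-p)/p}$ rather than through $\mu$.
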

\begin{proof}
Thanks to Lemma~\ref{lem-sup-Har}, there exist two constants $\widetilde{\theta} = \widetilde{\theta}(\mathtt{data})>1$ and $C = C(\mathtt{data}) \ge 1$ such that~\eqref{H-x-u-1} holds for every $1 \le \sigma < \varpi \le 2$. The key technique in our proof is based on Lemma~\ref{lem:Re-Holder}, which yields the reverse H\"oder property of $\mathcal{H}(\cdot,t)$ over $B_{\varpi r}$ under the constraint $0<t \le \left(\|\chi_{B_{\varpi r}}\|_{L^{\mathcal{H}}(\mathbb{R}^n)}\right)^{-1}$. For this reason, we separate the proof into two cases: 
$$0<\|\chi_{B_{\varpi r}}\|_{L^{\mathcal{H}}(\mathbb{R}^n)} \sup_{B_{\varpi r}} u \le 1 \ \mbox{ and } \ \|\chi_{B_{\varpi r}}\|_{L^{\mathcal{H}}(\mathbb{R}^n)} \sup_{B_{\varpi r}} u > 1.$$ 
In the first case, thanks to inequality~\eqref{phi-eps} in Lemma~\ref{lem:Re-Holder}, there exists a constant $\theta_1 = \theta_1(\mathtt{data})>1$ such that 
\begin{align}
\left(\fint_{B_{\varpi r}} \left[\mathcal{H}\left(x,\sup_{B_{\varpi r}} u\right)\right]^{\theta_1} dx \right)^{\frac{1}{\theta_1}} \le C\fint_{B_{\varpi r}} \mathcal{H}\left(x,\sup_{B_{\varpi r}}  u\right) dx.\label{RH-B-varpi-01}
\end{align}
Since $u \le \sup_{B_{\varpi r}}  u$ in $B_{\varpi r}$, for every $\alpha \in (0,p)$, we have
\begin{align}\notag
\fint_{B_{\varpi r}} \mathcal{H}\left(x,u\right) dx & = \fint_{B_{\varpi r}} \left(\frac{u(x)}{\sup_{B_{\varpi r}} u}\right)^p \left[ \left(\sup_{B_{\varpi r}} u\right)^p + a(x) u^{q-p} \left(\sup_{B_{\varpi r}} u\right)^p \right] dx \\
%& \le \fint_{B_{\varpi r}} \left(\frac{u(x)}{\sup_{B_{\varpi r}} u}\right)^p \mathcal{H}\left(x,\sup_{B_{\varpi r}} u\right) dx \notag \\
& \le \fint_{B_{\varpi r}} \left(\frac{u(x)}{\sup_{B_{\varpi r}}u}\right)^{\alpha} \mathcal{H}\left(x,\sup_{B_{\varpi r}} u\right) dx. \notag
\end{align}
Applying H\"older's inequality and then combining with~\eqref{RH-B-varpi-01}, we obtain
\begin{align}
\fint_{B_{\varpi r}} \mathcal{H}\left(x,u\right) dx & \le \left(\fint_{B_{\varpi r}} \left[\frac{u(x)}{\sup_{B_{\varpi r}} u}\right]^{\frac{\alpha \theta_1}{\theta_1-1}} dx\right)^{\frac{\theta_1-1}{\theta_1}} \left(\fint_{B_{\varpi r}} \left[\mathcal{H}\left(x,\sup_{B_{\varpi r}} u\right)\right]^{\theta_1} dx \right)^{\frac{1}{\theta_1}} \notag\\
%& \le C\left(\fint_{B_{\varpi r}} \left[\frac{u(x)}{\sup_{B_{\varpi r}} u}\right]^{\alpha} dx\right)^{\frac{\theta_1-1}{\theta_1}} \fint_{B_{\varpi r}} \mathcal{H}\left(x,\sup_{B_{\varpi r}} u\right) dx \notag\\
& = C\left(\fint_{B_{\varpi r}} \left[\frac{u(x)}{\sup_{B_{\varpi r}} u}\right]^{\alpha} dx\right)^{\frac{\theta_1-1}{\theta_1}} \mathcal{H}_{B_{\varpi r}}\left(\sup_{B_{\varpi r}} u\right),\label{H-u-10}
\end{align}
where, in the second inequality of~\eqref{H-u-10}, we use the fact that $\frac{u}{\sup_{B_{\varpi r}} u} \le 1$ in the ball $B_{\varpi r}$.  Next, substituting~\eqref{H-u-10} into the sup-estimate~\eqref{H-x-u-1} and invoking~\eqref{HB-1}, we arrive at:
\begin{align}
\sup_{B_{\sigma r}} u & \le C (\varpi-\sigma)^{-\widetilde{\theta}} \mathcal{H}_{B_{\varpi r}}^{-1}\left(\left(\fint_{B_{\varpi r}} \left[\frac{u(x)}{\sup_{B_{\varpi r}} u}\right]^{\alpha} dx\right)^{\frac{\theta_1-1}{\theta_1}} \mathcal{H}_{B_{\varpi r}}\left(\sup_{B_{\varpi r}}  u\right)\right) \notag \\
& \le C (\varpi-\sigma)^{-\widetilde{\theta}} \left(\fint_{B_{\varpi r}} \left[\frac{u(x)}{\sup_{B_{\varpi r}} u}\right]^{\alpha} dx\right)^{\frac{\theta_1-1}{\theta_1 q}}  \left(\sup_{B_{\varpi r}} u\right) \notag \\
%& \le C (\varpi-\sigma)^{-\widetilde{\theta}} \left(\fint_{B_{\varpi r}} u^{\alpha} dx\right)^{\frac{\theta_1-1}{\theta_1 q}} \left(\sup_{B_{\varpi r}} u\right)^{1-\frac{\alpha(\theta_1-1)}{\theta_1 q}} \notag \\
& \le \left[C (\varpi-\sigma)^{-\frac{\widetilde{\theta} \theta_1 q}{\alpha(\theta_1-1)}}  \left(\fint_{B_{\varpi r}} u^{\alpha} dx\right)^{\frac{1}{\alpha}}\right]^{\frac{\alpha(\theta_1-1)}{\theta_1 q}}  \left(\sup_{B_{\varpi r}}  u\right)^{1-\frac{\alpha(\theta_1-1)}{\theta_1 q}}. \notag
\end{align}
Applying Young's inequality to the last term of the above estimate, we obtain
\begin{align}
\sup_{B_{\sigma r}} u \le \frac{1}{2} \sup_{B_{\varpi r}} u + C (\varpi-\sigma)^{-\gamma} \left(\fint_{B_{\varpi r}} u^{\alpha} dx\right)^{\frac{1}{\alpha}}, \label{H-x-u-12}
\end{align}
where $\gamma := \frac{\widetilde{\theta} \theta_1 q}{\alpha(\theta_1-1)} > 1$. Regarding the second case $\mu := \|\chi_{B_{\varpi r}}\|_{L^\mathcal{H}} \sup_{B_{\varpi r}} u > 1$, we observe that by replacing $u$ with $\mu^{-1}u$, the normalized supremum satisfies $\|\chi_{B_{\varpi r}}\|_{L^\mathcal{H}} \sup (\mu^{-1}u) = 1$. Therefore, we can apply a similar technique to the first case by replacing $u$ with $\mu^{-1} u$. However, some exponents of $\mu$ may appear. Indeed, the reverse H\"older inequality~\eqref{phi-eps} can be applied to the case of $\mu^{-1}u$, yielding:
\begin{align}
\left(\fint_{B_{\varpi r}} \left[\mathcal{H}\left(x,\sup_{B_{\varpi r}} \mu^{-1} u\right)\right]^{\theta_1} dx \right)^{\frac{1}{\theta_1}} \le C\fint_{B_{\varpi r}} \mathcal{H}\left(x,\sup_{B_{\varpi r}} \mu^{-1} u\right) dx.\label{RH-B-varpi}
\end{align}
Since $\mu>1$ and $\mu^{-1} u \le \sup_{B_{\varpi r}} \mu^{-1} u$ in $B_{\varpi r}$, for every $\alpha \in (0,p)$, we have
\begin{align}\notag
\fint_{B_{\varpi r}} \mathcal{H}\left(x,u\right) dx & \le \mu^q \fint_{B_{\varpi r}} \mathcal{H}\left(x,\mu^{-1} u\right) dx 
%& \le \mu^q \fint_{B_{\varpi r}} \left(\frac{\mu^{-1} u(x)}{\sup_{B_{\varpi r}}\mu^{-1} u}\right)^p \mathcal{H}\left(x,\sup_{B_{\varpi r}}\mu^{-1} u\right) dx \notag \\
& \le \mu^q \fint_{B_{\varpi r}} \left(\frac{u(x)}{\sup_{B_{\varpi r}}u}\right)^{\alpha} \mathcal{H}\left(x,\sup_{B_{\varpi r}} \mu^{-1} u\right) dx. \notag
\end{align}
Applying H\"older's inequality and then combining with~\eqref{RH-B-varpi}, we obtain
\begin{align}
\fint_{B_{\varpi r}} \mathcal{H}\left(x,u\right) dx & \le \mu^q \left(\fint_{B_{\varpi r}} \left[\frac{u(x)}{\sup_{B_{\varpi r}} u}\right]^{\frac{\alpha\theta_1}{\theta_1-1}} dx\right)^{\frac{\theta_1-1}{\theta_1}} \left(\fint_{B_{\varpi r}} \left[\mathcal{H}\left(x,\sup_{B_{\varpi r}} \mu^{-1} u\right)\right]^{\theta_1} dx \right)^{\frac{1}{\theta_1}} \notag\\
%& \le C \mu^q \left(\fint_{B_{\varpi r}} \left[\frac{u(x)}{\sup_{B_{\varpi r}} u}\right]^{\alpha} dx\right)^{\frac{\theta_1-1}{\theta_1}} \fint_{B_{\varpi r}} \mathcal{H}\left(x,\sup_{B_{\varpi r}} \mu^{-1} u\right) dx \notag\\
& = C \mu^q \left(\fint_{B_{\varpi r}} \left[\frac{u(x)}{\sup_{B_{\varpi r}} u}\right]^{\alpha} dx\right)^{\frac{\theta_1-1}{\theta_1}}  \mathcal{H}_{B_{\varpi r}}\left(\sup_{B_{\varpi r}} \mu^{-1} u\right).\label{qua-met}
\end{align}
Substituting~\eqref{qua-met} into~\eqref{H-x-u-1}, and recalling that $\mu>1$, inequality~\eqref{HB-1} in Lemma~\ref{lem-H} gives us
\begin{align}
\sup_{B_{\sigma r}} u & \le C (\varpi-\sigma)^{-\widetilde{\theta}} \mathcal{H}_{B_{\varpi r}}^{-1}\left[\mu^q \left(\fint_{B_{\varpi r}} \left[\frac{u(x)}{\sup_{B_{\varpi r}} u}\right]^{\alpha} dx\right)^{\frac{\theta_1-1}{\theta_1}} \mathcal{H}_{B_{\varpi r}}\left(\sup_{B_{\varpi r}} \mu^{-1} u\right)\right] \notag \\
%& \le C (\varpi-\sigma)^{-\widetilde{\theta}} \mu^{\frac{q}{p}} \left(\fint_{B_{\varpi r}} \left[\frac{u(x)}{\sup_{B_{\varpi r}} u}\right]^{\alpha} dx\right)^{\frac{\theta_1-1}{\theta_1 q}}  \left(\sup_{B_{\varpi r}} \mu^{-1} u\right) \notag \\
& \le C (\varpi-\sigma)^{-\widetilde{\theta}} \mu^{\frac{q}{p}-1} \left(\fint_{B_{\varpi r}} u^{\alpha} dx\right)^{\frac{\theta_1-1}{\theta_1 q}}  \left(\sup_{B_{\varpi r}}  u\right)^{1-\frac{\alpha(\theta_1-1)}{\theta_1 q}}. \label{B-sig-mu}
\end{align}
On the other hand, thanks to Lemma~\ref{lem-norm-chi}, one has
\begin{align*}
\|\chi_{B_{\varpi r}}\|_{L^{\mathcal{H}}(\mathbb{R}^n)} & \le  2^{\frac{1}{p}} \max \left\{|{B_{2 r}}|^{\frac{1}{p}}; |{B_{2 r}}|^{\frac{1}{q}}(a)_{B_{2r}}^{\frac{1}{q}}\right\} \le C(\mathtt{data},(a)_{B_{2r}}),
\end{align*}
whenever $r\le 1$. It ensures that $\mu \le C = C(\mathtt{data}, (a)_{B_{2r}},\|u\|_{L^{\infty}(B_{2r})})$. For this reason, inequality~\eqref{B-sig-mu} deduces that
\begin{align}
\sup_{B_{\sigma r}} u 
%& \le C (\varpi-\sigma)^{-\widetilde{\theta}} \left[\left(\fint_{B_{\varpi r}} u^{\alpha} dx\right)^{\frac{1}{\alpha}}\right]^{\frac{\alpha(\theta_1-1)}{\theta_1 q}} \left(\sup_{B_{\varpi r}}  u\right)^{1-\frac{\alpha(\theta_1-1)}{\theta_1 q}} \notag \\
& \le \left[C (\varpi-\sigma)^{-\gamma} \left(\fint_{B_{\varpi r}} u^{\alpha} dx\right)^{\frac{1}{\alpha}}\right]^{\frac{\alpha(\theta_1-1)}{\theta_1 q}} \left(\sup_{B_{\varpi r}}  u\right)^{1-\frac{\alpha(\theta_1-1)}{\theta_1 q}}, \label{H-x-u-2}
\end{align}
where $\gamma$ is determined as in the previous case. Applying Young's inequality on the right-hand side of~\eqref{H-x-u-2}, one also obtains the same formula as in~\eqref{H-x-u-12}. Coupling the estimates for both cases, we conclude that  
\begin{align}
\sup_{B_{\sigma r}} u \le \frac{1}{2} \sup_{B_{\varpi r}} u + C(\varpi-\sigma)^{-\gamma} \left(\fint_{B_{2r}} u^{\alpha} dx\right)^{\frac{1}{\alpha}}, \label{H-x-u-45}
\end{align}
for every $1 \le \sigma < \varpi \le 2$. Finally, we can conclude~\eqref{ineq-sup-Harnack} from~\eqref{H-x-u-45} by applying Lemma~\ref{lem-Giuti}.
% with two following functions
%\begin{align*}
%G(t) = \sup_{B_{t r}} u, \quad t \in [1,2], \ \mbox{ and } \ K(t) = Ct^{-\gamma} \left(\fint_{B_{2r}} u^{\alpha} dx\right)^{\frac{1}{\alpha}}, \quad t >0.
%\end{align*}
The proof is now complete.
\end{proof}

\subsection{Proof of Theorem~\ref{theo-Harnack}}

\begin{proof}[Proof of Theorem~\ref{theo-Harnack}]
The proof of this theorem follows directly from Theorems~\ref{theo-weak-Har} and~\ref{theo-sup-Har}. More precisely, thanks to Theorems~\ref{theo-weak-Har}, one can find $C_1 = C_1(\mathtt{data}) \ge 1$ and $\alpha = \alpha(\mathtt{data}) \in (0,p)$ small enough such that
\begin{equation}\label{last-ineq-1}
\left(\fint_{B_{2r}} u^{\alpha} dx \right)^{\frac{1}{\alpha}} \le C_1 \inf_{B_r} u.
\end{equation}
On the other hand, Theorem~\ref{theo-sup-Har} ensures that
\begin{equation}\label{last-ineq-2}
\sup_{B_r} u \le C_2 \left( \fint_{B_{2r}} u^{\alpha} dx \right)^{\frac{1}{\alpha}},
\end{equation}
for some $C_2 = C_2(\mathtt{data}, (a)_{B_{18r}},\|u\|_{L^{\infty}(B_{18r})}) \ge 1$. Hence, we deduce~\eqref{ineq-Harnack} by combining inequalities~\eqref{last-ineq-1} and~\eqref{last-ineq-2}.
\end{proof}

\section*{Acknowledgement}
This research is funded by Vietnam National Foundation for Science and Technology Development (NAFOSTED), Grant Number: 101.02-2025.03.

\section*{Conflict of Interest}
The authors declared that they have no conflict of interest.

\section*{Declarations}
Data sharing not applicable to this article as no datasets were generated or analysed during the current study.

\end{document}